\newcommand\numberthis{\addtocounter{equation}{1}\tag{\theequation}}
\newcounter{n}
\numberwithin{n}{section}
\theoremstyle{plain}
  \newtheorem{lemma}[n]{Lemma}
  \newtheorem{theorem}[n]{Theorem}
  \newtheorem{proposition}[n]{Proposition}
  \newtheorem{corollary}[n]{Corollary}
\theoremstyle{definition}
  \newtheorem{definition}[n]{Definition}
\definecolor{colorlinks}{RGB}{0, 24, 168}
\definecolor{colorcites}{RGB}{124, 10, 2}
\renewcommand{\t}{\mathbf{s}}
\renewcommand{\a}{\mathbf{a}}
\renewcommand{\b}{\mathbf{b}}
\newcommand{\e}{\mathbf{e}}
\newcommand{\n}{\mathbf{n}}
\newcommand{\p}{\mathbf{p}}
\newcommand{\f}{\mathbf{f}}
\newcommand{\x}{\mathbf{x}}
\renewcommand{\u}{\mathbf{u}}
\newcommand{\z}{\mathbf{z}}
\newcommand{\s}{\mathbf{s}}
\newcommand{\m}{\mathbf{m}}
\newcommand{\y}{\mathbf{y}}
\newcommand{\0}{\mathbf{0}}
\newcommand{\R}{\mathbb R}
\newcommand{\T}{\mathbb T}
\renewcommand{\S}{\mathbb S}
\newcommand{\N}{\mathbb N}
\newcommand{\V}{\mathbb V}
\newcommand{\E}{\mathbb E}
\renewcommand{\P}{\mathbb P}
\newcommand{\Z}{\mathbb Z}
\newcommand\torusedges{\mathbb E(\mathbb V)}
\newcommand{\Var}{\operatorname{Var}}
  \renewcommand{\gcd}{\operatorname{gcd}}
\newcommand\optionalindent{}
\begin{document}

  \title[Walk on fractures of a hypertorus]{Diffusivity of a walk on fractures of a hypertorus}
  \makeatletter
  \@namedef{subjclassname@2020}{\textup{2020} Mathematics Subject Classification}
  \makeatother
  \subjclass[2020]{Primary 60J10; secondary 60F05}
  \author{Piet Lammers}
  \date{}
\keywords{Random walk, Markov chain, central limit theorem, martingale approximation}
  \address{Statistical Laboratory, Centre for Mathematical Sciences, University of Cambridge}
  \email{p.g.lammers@statslab.cam.ac.uk}

\begin{abstract}
This article studies discrete height functions on the discrete hypertorus. These are functions on the vertices of this hypertorus graph for which the derivative satisfies a specific condition on each edge. We then perform a random walk on the set of such height functions, in the spirit of \emph{Diffusivity of a random walk on random walks}, a work of Boissard, Cohen, Espinasse, and Norris. The goal is to estimate the diffusivity of this random walk in the mesh limit. It turns out that each height functions is characterised by a number of so-called \emph{fractures} of the hypertorus. These fractures are then studied in isolation; we are able to understand their asymptotic behaviour in the mesh limit due to the recent understanding of the associated random surfaces. This allows for an asymptotic reduction to a one-dimensional continuous system consisting of $\operatorname{gcd}\mathbf n$ parts where $\mathbf n\in\mathbb N^d$ is the fundamental parameter of the original model. We then prove that the diffusivity of the random walk tends to $1/(1+2\operatorname{gcd}\mathbf n)$ in this mesh limit.
\\

\noindent{\scshape R\'esum\'e.}
Cet article étudie des fonctions de hauteur discrètes sur l'hypertore discret.
Il s'agit de fonctions définies sur les sommets de l'hypertore dont la dérivée satisfait une certaine condition en chaque arête.
Nous considérons une marche aléatoire sur cet ensemble de fonctions, à l'instar des travaux de Boissard, Cohen, Espinasse et Norris dans leur article \emph{Diffusivity of a random walk on random walks}.
L'objectif est d'estimer la diffusivité de cette marche aléatoire dans la limite d'échelle.
Nous montrons que toute fonction de hauteur est caractérisée par le nombre de \emph{fractures} qu'elle induit sur l'hypertore.
Nous étudions ensuite ces fractures ; il est possible de comprendre leur comportement asymptotique dans la limite d'échelle grâce à de récents travaux sur les surfaces aléatoires qui leur sont associées.
Cela permet de réduire notre étude asymptotique à un système continu à une dimension, constitué de $\operatorname{pgcd}\n$ parties, où $\n \in \mathbb N^d$ est un paramètre fondamental du modèle initial.
Nous montrons alors que la diffusivité de la marche aléatoire converge vers $1/(1 + 2 \operatorname{pgcd} \n)$ dans cette limite d'échelle.
\end{abstract}
    
  \maketitle
  \setcounter{tocdepth}{1}
  \tableofcontents

  \renewcommand\optionalindent{\tabto{1.2cm}}
\section{Introduction and main result}

\subsection{Background}
Consider a finite connected bipartite graph $G=(V,E)$,
and consider the set of functions $f:V\to\Z$
which have the property that $|f(y)-f(x)|=1$
for any $\{x,y\}\in E$.
Such functions are called \emph{height functions},
and two height functions $f$ and $g$
are called \emph{neighbours}
whenever $|f-g|$ is identically equal to $1$.
One can now consider the random walk $X$ on the locally finite graph of height functions,
that is, the random walk which at each time increment moves to a neighbour
of the current height function uniformly at random.
Let $r\in V$ denote some distinguished root vertex.
By standard arguments, as $n\to\infty$, the law of
\begin{equation*}\textstyle
  \left(\frac1{\sqrt n}X_{\lfloor nt\rfloor}(r)\right)_{t\in [0,1]}
\end{equation*}
tends to that of a Brownian motion of
some diffusivity $\alpha(G)$ which depends on the graph $G$
only.
For example, it is easy to work out that $\alpha(G)=1$
when $G$ consists of a single vertex,
and that $\alpha(G)=2/3$ when $G$ consists of a single edge.
The problem of determining $\alpha(G)$ is motivated by physics,
where one is interested in the diffusivity of a molecule
as a function of the internal flexibility of that molecule.
See for example~\cite{MR2338265} for a study of \emph{molecular spiders}.

It was proven by Boissard, Cohen, Espinasse, and Norris that $\alpha(G)=2/(n+1)$
whenever $G$ is a line graph consisting of $n$ vertices~\cite{BCEN}.
Espinasse, Guillotin-Plantard, and Nadeau take a different setup,
where the walk is furthermore restricted to height
functions $f$ for which $f(n)-f(1)$ is equal to some fixed value,
where $1$ and $n$ denote the first and last vertex in the line graph
$G$ respectively.
They are able compute the value of $\alpha$
explicitly for this walk~\cite{EGN}.
By fixing the value of $f(n)-f(1)$,
one effectively reduces to working on the circle graph,
or, equivalently, to working with periodic height functions.
The circle graph is in some sense one-dimensional.
The motivating idea behind this article is to replace
the circle graph by a discretisation of the hypertorus,
in any dimension $d\geq 2$.

\begin{figure}
    \begin{center}
      \begin{subfigure}{.35\textwidth}
        \centering
        \begin{tikzpicture}
          \begin{scope}
\clip (-0.375, -1.125) -- (2.625, -1.125) -- (2.625, 3.375) -- (-0.375, 3.375) -- cycle;
\draw[very thick] (-0.375, 0.0) -- (2.625, 0.0);
\draw[very thick] (0.0, -0.375) -- (0.0, 2.625);
\draw[very thick] (-0.375, 0.75) -- (2.625, 0.75);
\draw[very thick] (0.75, -0.375) -- (0.75, 2.625);
\draw[very thick] (-0.375, 1.5) -- (2.625, 1.5);
\draw[very thick] (1.5, -0.375) -- (1.5, 2.625);
\draw[very thick] (-0.375, 2.25) -- (2.625, 2.25);
\draw[very thick] (2.25, -0.375) -- (2.25, 2.625);
\draw[black, fill=white] (0.0, 0.0) circle (0.3);
\node at (0.0, 0.0) {$2$};
\draw[black, fill=white] (0.0, 0.75) circle (0.3);
\node at (0.0, 0.75) {$2.5$};
\draw[black, fill=white] (0.0, 1.5) circle (0.3);
\node at (0.0, 1.5) {$1$};
\draw[black, fill=white] (0.0, 2.25) circle (0.3);
\node at (0.0, 2.25) {$1.5$};
\draw[black, fill=white] (0.75, 0.0) circle (0.3);
\node at (0.75, 0.0) {$2.5$};
\draw[black, fill=white] (0.75, 0.75) circle (0.3);
\node at (0.75, 0.75) {$3$};
\draw[black, fill=white] (0.75, 1.5) circle (0.3);
\node at (0.75, 1.5) {$1.5$};
\draw[black, fill=white] (0.75, 2.25) circle (0.3);
\node at (0.75, 2.25) {$2$};
\draw[black, fill=white] (1.5, 0.0) circle (0.3);
\node at (1.5, 0.0) {$1$};
\draw[black, fill=white] (1.5, 0.75) circle (0.3);
\node at (1.5, 0.75) {$1.5$};
\draw[black, fill=white] (1.5, 1.5) circle (0.3);
\node at (1.5, 1.5) {$2$};
\draw[black, fill=white] (1.5, 2.25) circle (0.3);
\node at (1.5, 2.25) {$0.5$};
\draw[black, fill=white] (2.25, 0.0) circle (0.3);
\node at (2.25, 0.0) {$1.5$};
\draw[black, fill=white] (2.25, 0.75) circle (0.3);
\node at (2.25, 0.75) {$2$};
\draw[black, fill=white] (2.25, 1.5) circle (0.3);
\node at (2.25, 1.5) {$0.5$};
\draw[black, fill=white] (2.25, 2.25) circle (0.3);
\node at (2.25, 2.25) {$1$};
\end{scope}
        \end{tikzpicture}
        \subcaption{$f$}
        \label{fig_small_example_A}
      \end{subfigure}
      \qquad
      \begin{subfigure}{.35\textwidth}
        \centering
        \begin{tikzpicture}
          \begin{scope}
\clip (-1.125, -1.125) -- (3.375, -1.125) -- (3.375, 3.375) -- (-1.125, 3.375) -- cycle;
\draw[very thick] (-1.125, -0.75) -- (3.375, -0.75);
\draw[very thick] (-0.75, -1.125) -- (-0.75, 3.375);
\draw[very thick] (-1.125, 0.0) -- (3.375, 0.0);
\draw[very thick] (0.0, -1.125) -- (0.0, 3.375);
\draw[very thick] (-1.125, 0.75) -- (3.375, 0.75);
\draw[very thick] (0.75, -1.125) -- (0.75, 3.375);
\draw[very thick] (-1.125, 1.5) -- (3.375, 1.5);
\draw[very thick] (1.5, -1.125) -- (1.5, 3.375);
\draw[very thick] (-1.125, 2.25) -- (3.375, 2.25);
\draw[very thick] (2.25, -1.125) -- (2.25, 3.375);
\draw[very thick] (-1.125, 3.0) -- (3.375, 3.0);
\draw[very thick] (3.0, -1.125) -- (3.0, 3.375);
\draw[very thin] (-1.125, -0.375) -- (3.375, -0.375);
\draw[very thin] (-1.125, 2.625) -- (3.375, 2.625);
\draw[very thin] (2.625, -1.125) -- (2.625, 3.375);
\draw[very thin] (-0.375, -1.125) -- (-0.375, 3.375);
\draw[black!33, very thick] (-1.125, 1.125) -- (1.125, 1.125) -- (1.125, -1.125);
\draw[black!33, very thick] (1.125, 3.375) -- (1.125, 1.875) -- (1.875, 1.875) -- (1.875, 1.125) -- (3.375, 1.125);
\draw[gray, fill=white] (-0.75, -0.75) circle (0.3);
\node at (-0.75, -0.75) {$2$};
\draw[gray, fill=white] (-0.75, 0.0) circle (0.3);
\node at (-0.75, 0.0) {$2$};
\draw[gray, fill=white] (-0.75, 0.75) circle (0.3);
\node at (-0.75, 0.75) {$2$};
\draw[gray, fill=white] (-0.75, 1.5) circle (0.3);
\node at (-0.75, 1.5) {$0$};
\draw[gray, fill=white] (-0.75, 2.25) circle (0.3);
\node at (-0.75, 2.25) {$0$};
\draw[gray, fill=white] (-0.75, 3.0) circle (0.3);
\node at (-0.75, 3.0) {$0$};
\draw[gray, fill=white] (0.0, -0.75) circle (0.3);
\node at (0.0, -0.75) {$2$};
\draw[black, fill=white] (0.0, 0.0) circle (0.3);
\node at (0.0, 0.0) {$2$};
\draw[black, fill=white] (0.0, 0.75) circle (0.3);
\node at (0.0, 0.75) {$2$};
\draw[black, fill=white] (0.0, 1.5) circle (0.3);
\node at (0.0, 1.5) {$0$};
\draw[black, fill=white] (0.0, 2.25) circle (0.3);
\node at (0.0, 2.25) {$0$};
\draw[gray, fill=white] (0.0, 3.0) circle (0.3);
\node at (0.0, 3.0) {$0$};
\draw[gray, fill=white] (0.75, -0.75) circle (0.3);
\node at (0.75, -0.75) {$2$};
\draw[black, fill=white] (0.75, 0.0) circle (0.3);
\node at (0.75, 0.0) {$2$};
\draw[black, fill=white] (0.75, 0.75) circle (0.3);
\node at (0.75, 0.75) {$2$};
\draw[black, fill=white] (0.75, 1.5) circle (0.3);
\node at (0.75, 1.5) {$0$};
\draw[black, fill=white] (0.75, 2.25) circle (0.3);
\node at (0.75, 2.25) {$0$};
\draw[gray, fill=white] (0.75, 3.0) circle (0.3);
\node at (0.75, 3.0) {$0$};
\draw[gray, fill=white] (1.5, -0.75) circle (0.3);
\node at (1.5, -0.75) {$0$};
\draw[black, fill=white] (1.5, 0.0) circle (0.3);
\node at (1.5, 0.0) {$0$};
\draw[black, fill=white] (1.5, 0.75) circle (0.3);
\node at (1.5, 0.75) {$0$};
\draw[black, fill=white] (1.5, 1.5) circle (0.3);
\node at (1.5, 1.5) {$0$};
\draw[black, fill=white] (1.5, 2.25) circle (0.3);
\node at (1.5, 2.25) {$-2$};
\draw[gray, fill=white] (1.5, 3.0) circle (0.3);
\node at (1.5, 3.0) {$-2$};
\draw[gray, fill=white] (2.25, -0.75) circle (0.3);
\node at (2.25, -0.75) {$0$};
\draw[black, fill=white] (2.25, 0.0) circle (0.3);
\node at (2.25, 0.0) {$0$};
\draw[black, fill=white] (2.25, 0.75) circle (0.3);
\node at (2.25, 0.75) {$0$};
\draw[black, fill=white] (2.25, 1.5) circle (0.3);
\node at (2.25, 1.5) {$-2$};
\draw[black, fill=white] (2.25, 2.25) circle (0.3);
\node at (2.25, 2.25) {$-2$};
\draw[gray, fill=white] (2.25, 3.0) circle (0.3);
\node at (2.25, 3.0) {$-2$};
\draw[gray, fill=white] (3.0, -0.75) circle (0.3);
\node at (3.0, -0.75) {$0$};
\draw[gray, fill=white] (3.0, 0.0) circle (0.3);
\node at (3.0, 0.0) {$0$};
\draw[gray, fill=white] (3.0, 0.75) circle (0.3);
\node at (3.0, 0.75) {$0$};
\draw[gray, fill=white] (3.0, 1.5) circle (0.3);
\node at (3.0, 1.5) {$-2$};
\draw[gray, fill=white] (3.0, 2.25) circle (0.3);
\node at (3.0, 2.25) {$-2$};
\draw[gray, fill=white] (3.0, 3.0) circle (0.3);
\node at (3.0, 3.0) {$-2$};
\end{scope}
        \end{tikzpicture}
        \subcaption{$\tilde f$}
        \label{fig_small_example_B}
      \end{subfigure}\\[1em]
      \begin{subfigure}{.35\textwidth}
        \centering
        \begin{tikzpicture}
          \begin{scope}
\clip (-0.375, -1.125) -- (2.625, -1.125) -- (2.625, 3.375) -- (-0.375, 3.375) -- cycle;
\draw[very thick] (-0.375, 0.0) -- (2.625, 0.0);
\draw[very thick] (0.0, -0.375) -- (0.0, 2.625);
\draw[very thick] (-0.375, 0.75) -- (2.625, 0.75);
\draw[very thick] (0.75, -0.375) -- (0.75, 2.625);
\draw[very thick] (-0.375, 1.5) -- (2.625, 1.5);
\draw[very thick] (1.5, -0.375) -- (1.5, 2.625);
\draw[very thick] (-0.375, 2.25) -- (2.625, 2.25);
\draw[very thick] (2.25, -0.375) -- (2.25, 2.625);
\draw[black, fill=white] (0.0, 0.0) circle (0.3);
\node at (0.0, 0.0) {$3$};
\draw[black, fill=white] (0.0, 0.75) circle (0.3);
\node at (0.0, 0.75) {$1.5$};
\draw[black, fill=white] (0.0, 1.5) circle (0.3);
\node at (0.0, 1.5) {$2$};
\draw[black, fill=white] (0.0, 2.25) circle (0.3);
\node at (0.0, 2.25) {$2.5$};
\draw[black, fill=white] (0.75, 0.0) circle (0.3);
\node at (0.75, 0.0) {$1.5$};
\draw[black, fill=white] (0.75, 0.75) circle (0.3);
\node at (0.75, 0.75) {$2$};
\draw[black, fill=white] (0.75, 1.5) circle (0.3);
\node at (0.75, 1.5) {$0.5$};
\draw[black, fill=white] (0.75, 2.25) circle (0.3);
\node at (0.75, 2.25) {$1$};
\draw[black, fill=white] (1.5, 0.0) circle (0.3);
\node at (1.5, 0.0) {$2$};
\draw[black, fill=white] (1.5, 0.75) circle (0.3);
\node at (1.5, 0.75) {$2.5$};
\draw[black, fill=white] (1.5, 1.5) circle (0.3);
\node at (1.5, 1.5) {$1$};
\draw[black, fill=white] (1.5, 2.25) circle (0.3);
\node at (1.5, 2.25) {$1.5$};
\draw[black, fill=white] (2.25, 0.0) circle (0.3);
\node at (2.25, 0.0) {$2.5$};
\draw[black, fill=white] (2.25, 0.75) circle (0.3);
\node at (2.25, 0.75) {$1$};
\draw[black, fill=white] (2.25, 1.5) circle (0.3);
\node at (2.25, 1.5) {$1.5$};
\draw[black, fill=white] (2.25, 2.25) circle (0.3);
\node at (2.25, 2.25) {$2$};
\end{scope}
        \end{tikzpicture}
        \subcaption{$g$}
        \label{fig_small_example_gA}
      \end{subfigure}
      \qquad
      \begin{subfigure}{.35\textwidth}
        \centering
        \begin{tikzpicture}
          \begin{scope}
\clip (-1.125, -1.125) -- (3.375, -1.125) -- (3.375, 3.375) -- (-1.125, 3.375) -- cycle;
\draw[very thick] (-1.125, -0.75) -- (3.375, -0.75);
\draw[very thick] (-0.75, -1.125) -- (-0.75, 3.375);
\draw[very thick] (-1.125, 0.0) -- (3.375, 0.0);
\draw[very thick] (0.0, -1.125) -- (0.0, 3.375);
\draw[very thick] (-1.125, 0.75) -- (3.375, 0.75);
\draw[very thick] (0.75, -1.125) -- (0.75, 3.375);
\draw[very thick] (-1.125, 1.5) -- (3.375, 1.5);
\draw[very thick] (1.5, -1.125) -- (1.5, 3.375);
\draw[very thick] (-1.125, 2.25) -- (3.375, 2.25);
\draw[very thick] (2.25, -1.125) -- (2.25, 3.375);
\draw[very thick] (-1.125, 3.0) -- (3.375, 3.0);
\draw[very thick] (3.0, -1.125) -- (3.0, 3.375);
\draw[very thin] (-1.125, -0.375) -- (3.375, -0.375);
\draw[very thin] (-1.125, 2.625) -- (3.375, 2.625);
\draw[very thin] (2.625, -1.125) -- (2.625, 3.375);
\draw[very thin] (-0.375, -1.125) -- (-0.375, 3.375);
\draw[black!66, very thick] (-1.125, 0.375) -- (0.375, 0.375) -- (0.375, -1.125);
\draw[black!66, very thick] (0.375, 3.375) -- (0.375, 1.125) -- (1.875, 1.125) -- (1.875, 0.375) -- (3.375, 0.375);
\draw[gray, fill=white] (-0.75, -0.75) circle (0.3);
\node at (-0.75, -0.75) {$3$};
\draw[gray, fill=white] (-0.75, 0.0) circle (0.3);
\node at (-0.75, 0.0) {$3$};
\draw[gray, fill=white] (-0.75, 0.75) circle (0.3);
\node at (-0.75, 0.75) {$1$};
\draw[gray, fill=white] (-0.75, 1.5) circle (0.3);
\node at (-0.75, 1.5) {$1$};
\draw[gray, fill=white] (-0.75, 2.25) circle (0.3);
\node at (-0.75, 2.25) {$1$};
\draw[gray, fill=white] (-0.75, 3.0) circle (0.3);
\node at (-0.75, 3.0) {$1$};
\draw[gray, fill=white] (0.0, -0.75) circle (0.3);
\node at (0.0, -0.75) {$3$};
\draw[black, fill=white] (0.0, 0.0) circle (0.3);
\node at (0.0, 0.0) {$3$};
\draw[black, fill=white] (0.0, 0.75) circle (0.3);
\node at (0.0, 0.75) {$1$};
\draw[black, fill=white] (0.0, 1.5) circle (0.3);
\node at (0.0, 1.5) {$1$};
\draw[black, fill=white] (0.0, 2.25) circle (0.3);
\node at (0.0, 2.25) {$1$};
\draw[gray, fill=white] (0.0, 3.0) circle (0.3);
\node at (0.0, 3.0) {$1$};
\draw[gray, fill=white] (0.75, -0.75) circle (0.3);
\node at (0.75, -0.75) {$1$};
\draw[black, fill=white] (0.75, 0.0) circle (0.3);
\node at (0.75, 0.0) {$1$};
\draw[black, fill=white] (0.75, 0.75) circle (0.3);
\node at (0.75, 0.75) {$1$};
\draw[black, fill=white] (0.75, 1.5) circle (0.3);
\node at (0.75, 1.5) {$-1$};
\draw[black, fill=white] (0.75, 2.25) circle (0.3);
\node at (0.75, 2.25) {$-1$};
\draw[gray, fill=white] (0.75, 3.0) circle (0.3);
\node at (0.75, 3.0) {$-1$};
\draw[gray, fill=white] (1.5, -0.75) circle (0.3);
\node at (1.5, -0.75) {$1$};
\draw[black, fill=white] (1.5, 0.0) circle (0.3);
\node at (1.5, 0.0) {$1$};
\draw[black, fill=white] (1.5, 0.75) circle (0.3);
\node at (1.5, 0.75) {$1$};
\draw[black, fill=white] (1.5, 1.5) circle (0.3);
\node at (1.5, 1.5) {$-1$};
\draw[black, fill=white] (1.5, 2.25) circle (0.3);
\node at (1.5, 2.25) {$-1$};
\draw[gray, fill=white] (1.5, 3.0) circle (0.3);
\node at (1.5, 3.0) {$-1$};
\draw[gray, fill=white] (2.25, -0.75) circle (0.3);
\node at (2.25, -0.75) {$1$};
\draw[black, fill=white] (2.25, 0.0) circle (0.3);
\node at (2.25, 0.0) {$1$};
\draw[black, fill=white] (2.25, 0.75) circle (0.3);
\node at (2.25, 0.75) {$-1$};
\draw[black, fill=white] (2.25, 1.5) circle (0.3);
\node at (2.25, 1.5) {$-1$};
\draw[black, fill=white] (2.25, 2.25) circle (0.3);
\node at (2.25, 2.25) {$-1$};
\draw[gray, fill=white] (2.25, 3.0) circle (0.3);
\node at (2.25, 3.0) {$-1$};
\draw[gray, fill=white] (3.0, -0.75) circle (0.3);
\node at (3.0, -0.75) {$1$};
\draw[gray, fill=white] (3.0, 0.0) circle (0.3);
\node at (3.0, 0.0) {$1$};
\draw[gray, fill=white] (3.0, 0.75) circle (0.3);
\node at (3.0, 0.75) {$-1$};
\draw[gray, fill=white] (3.0, 1.5) circle (0.3);
\node at (3.0, 1.5) {$-1$};
\draw[gray, fill=white] (3.0, 2.25) circle (0.3);
\node at (3.0, 2.25) {$-1$};
\draw[gray, fill=white] (3.0, 3.0) circle (0.3);
\node at (3.0, 3.0) {$-1$};
\end{scope}
        \end{tikzpicture}
        \subcaption{$\tilde g$}
        \label{fig_small_example_gB}
      \end{subfigure}%
      \caption
    {Two neighbouring $\p\n$-functions with $\p=(3,3)$ and $\n=(1,1)$.
    Subfigures~\ref{fig_small_example_A} and~\ref{fig_small_example_gA}  present height functions, which are defined 
    on the discrete torus.
    Subfigures~\ref{fig_small_example_B} and~\ref{fig_small_example_gB} present their lifts,
    which are defined on $\mathbb Z^d$.
    The boundaries of the associated 
    stepped surfaces---defined later---are drawn as grey lines.
    Moving four vertices left or down equates to increasing the height by two,
    owing to the periodicity of the original torus.
    }
    \label{fig_small_example}
    \end{center}
  \end{figure}

\subsection{Periodic height functions}
\label{whatever}
Fix a dimension $d\in\N$ throughout this paper.
The random walk of interest depends on the choice of two parameters $\p,\n\in\mathbb N^d$.
The parameter $\n$ is always fixed,
and we consider $\p$ fixed, at least until further notice.

We start with a description of the discrete torus.
Define $\s:=\p+\n$; the \emph{$\s$-torus} is the natural square lattice 
graph on the vertex set $\V:=\prod_{i=1}^d(\Z/\s_i\Z)$.
Write $\0$ for the natural zero element in $\V$.
If we write $(\e_i)_{1\leq i\leq d}$ for the standard basis of $\R^d$,
then the edges of the torus are given by $\E(\V):=\{\{\x,\x+\e_i\}:\x\in \V,1\leq i\leq d\}$.
Another way of introducing the $\s$-torus is by writing $\sigma=\sigma_\s$ for
the unique linear isomorphism of $\R^d$
that maps $\e_i$ to $\s_i\e_i$,
and then obtaining the graph by taking the quotient of 
the square lattice graph $\Z^d$
by $\sigma\Z^d$.
The map $\sigma$ is called the \emph{scaling map}.

We are now ready to introduce \emph{$\p\n$-periodic height functions},
which are height functions defined on this torus.
They are identical in spirit to the height functions described in the introduction,
with each edge oriented in one of two possible directions: \emph{up} or \emph{down}.
The parameters $\p$ and $\n$ encode how many edges of each orientation
are forced to appear in each height function. 
Formally, a \emph{$\p\n$-periodic height function}
or simply a \emph{$\p\n$-function} is a map $f:\V\to\R$ that satisfies,
for each $\{\x,\x+\e_i\}\in \E(\V)$,
\[\textstyle f(\x+\e_i)-f(\x)=-\frac{\p_i-\n_i}{\s_i}\pm 1.\]
We shall see shortly that the fraction,
which did not appear in the original definition in the introduction,
conveniently encodes the parameters $\p$ and $\n$
into the periodic boundary conditions imposed on the model.

A $\p\n$-function should be thought of as assigning a height to
each vertex of the $\s$-torus; see Subfigure~\ref{fig_small_example_A}
for a small numerical example and
Figure~\ref{fig_large_example} for a height function on a larger torus.
Write, with slight abuse of notation, $\nabla f$ for the unique
map $\nabla f:\E(\V)\to\{-1,1\}$
such that, for each $e=\{\x,\x+\e_i\}\in \E(\V)$,
\[\textstyle f(\x+\e_i)-f(\x)=-\frac{\p_i-\n_i}{\s_i}+\nabla f(e).\]
If $\nabla f(e)=1$ then the edge $e$ is called an
\emph{up step} for $f$, otherwise it is called a \emph{down step}.
Fix for now a reference point $\x\in \V$ and a direction $1\leq i\leq d$.
There are $\s_i$ edges of the form $\{\x+k\e_i,\x+(k+1)\e_i\}$;
they belong to a circular walk around the torus.
Summing the previous equation over these $\s_i$ edges gives
\begin{multline*}
   \sum\nolimits_{k=0}^{\s_i-1} f(\x+(k+1)\e_i)-f(\x+k\e_i)
  \\ =-(\p_i-\n_i)+\sum\nolimits_{k=0}^{\s_i-1}\nabla f(\{\x+k\e_i,\x+(k+1)\e_i\}).
\end{multline*}
Both sides of this equation equal zero as the terms in the sum on the left cancel.
Therefore this collection of $\s_i$ edges
must consist of exactly $\p_i$ up steps and exactly $\n_i$ down steps.
The interest in this paper is in small values for the entries of $\n$
and large values for the entries of $\p$.
We shall see that the down steps of a $\p\n$-function are organised into \emph{fractures},
and $\n_i$ is the number of times that a circular walk around the $\s$-torus
in direction $i$ intersects such a fracture, see Figures~\ref{fig_small_example} and~\ref{fig_large_example}.

\begin{figure}
\begin{center}
\includegraphics[width=0.8\textwidth]{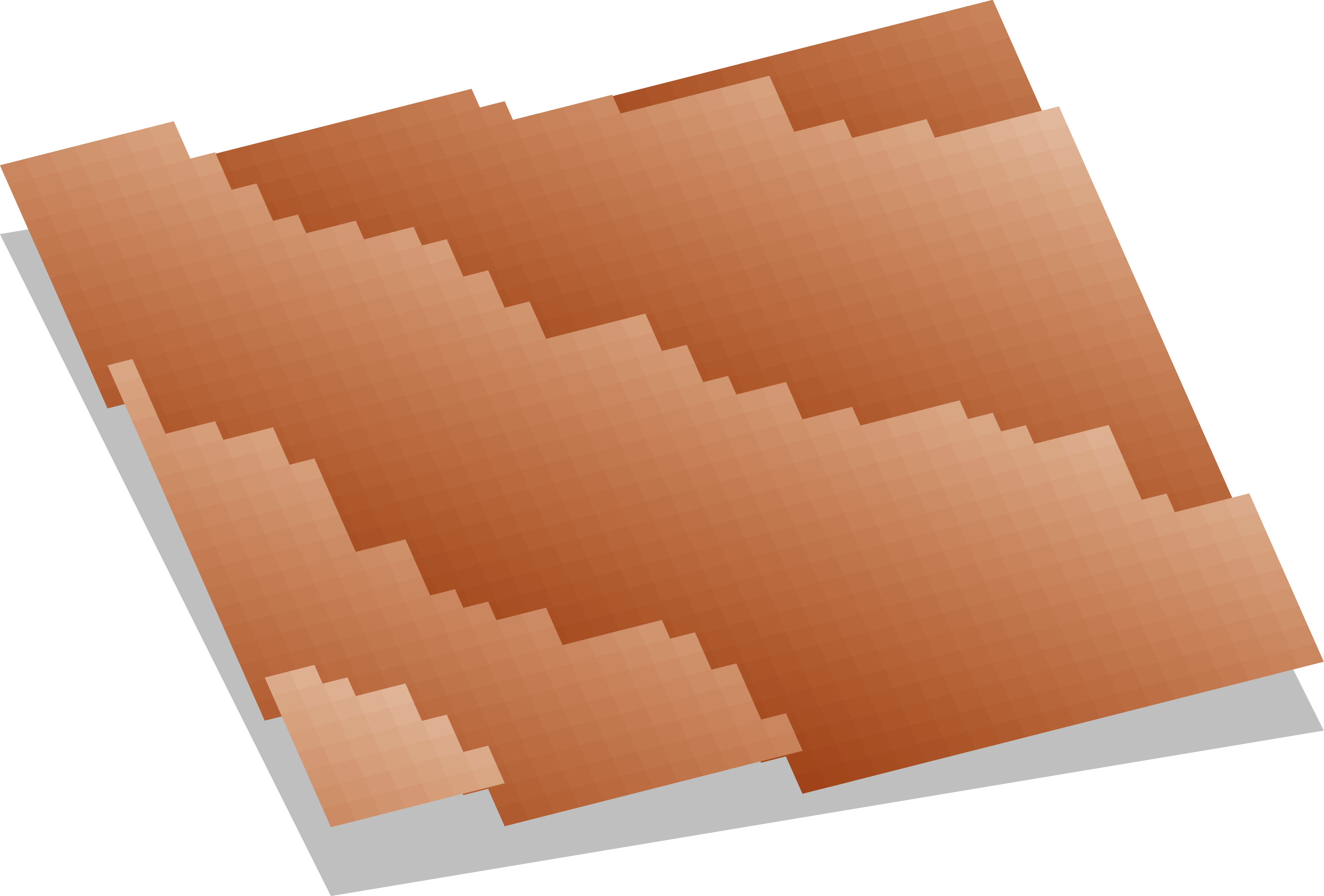}
\caption
[A $\p\n$-function]
{A $\p\n$-function with $\p=(38,38)$ and $\n=(2,2)$}
\label{fig_large_example}
\end{center}
\end{figure}

\subsection{A walk on height functions}
The previous provides the static picture of a single $\p\n$-function;
we now introduce a locally finite graph on the set of $\p\n$-functions
so that we can construct the random walk on $\p\n$-functions.
Say that two $\p\n$-functions $f_1$ and $f_2$ are \emph{neighbours},
and write $f_1\sim f_2$, whenever
 $|f_1-f_2|$ is identically equal to $1$.
 Note that each $\p\n$-function has finitely many neighbours.
 This means that we can consistently define a random walk
 $X=(X_k)_{k\geq 0}$ on the set of $\p\n$-functions;
 the next state of the walk is chosen uniformly at random from
 the neighbours of the current state.
 We impose that $X_0(\0)=0$,
 so that $X_k(\0)\in\Z$ for all $k\geq 0$ almost surely.
 For simplicity, we shall restrict ourselves
 to studying $\p\n$-functions $f$
 with $f(\0)\in\Z$ in the sequel.

 \subsection{Main result}
 Say that two $\p\n$-functions \emph{have the same shape} if they differ
  by a constant. \emph{Shapes} are thus equivalence classes of $\p\n$-functions.
  There are finitely many shapes as the shape
  of $f$ is determined entirely by $\nabla f$.
 We show furthermore that the random walk $X$ is irreducible, regardless of the choice of  $\p$ and $\n$.
 Arguments similar to those presented
 in \cite{BCEN,EGN} imply that the law of
 \begin{equation*}\textstyle
   \left(\frac1{\sqrt n}X_{\lfloor nt\rfloor}(\0)\right)_{t\in [0,1]}
 \end{equation*}
 converges to that of a Brownian motion of some diffusivity $\alpha(\p,\n)$ as $n\to\infty$,
 and---as the notation suggests---this diffusivity must depend on $\p$ and $\n$ only.
 Espinasse, Guillotin-Plantard, and Nadeau \cite{EGN} provide an explicit formula for $\alpha(\p,\n)$  whenever $d=1$.
 The following theorem, which is the main result of this paper, holds in any dimension $d$.

 \begin{theorem}\label{main}
 	In any dimension $d\in\mathbb N$, we have
 \[\lim_{\p\to\infty}\alpha(\p,\n)=\frac{1}{1+2\operatorname{gcd}\n}.\]
 Here we write $\p\to\infty$ for $\p_1,...,\p_d\to\infty$, and $\operatorname{gcd}\n$ is short for $\operatorname{gcd}(\n_1,...,\n_d)$.
 \end{theorem}

 The theorem is interesting for the following reasons.
 If $d=1$ then
 the diffusivity $\alpha(\p,\n)$ of the walk  depends on two parameters
 $\p,\n\in \mathbb{N}$,
 and the value of $\alpha(\p,\n)$ is decreasing in both parameters; see \cite{EGN} for
 the explicit values of $\alpha(\p,\n)$.
 One may expect the same behaviour whenever $d>1$,
 because an increase in the entries of $\n$ associates
 with an increase in the number of distinct
 shapes, which in turn one may expect to slow down the random walk and decrease the diffusivity.
 This intuition is wrong:
 if the entries of $\p$ are large
 and the entries of $\n$ small,
 then the theorem tells us that the diffusivity $\alpha(\p,\n)$ depends mostly on $\gcd\n$.
 Increasing $\n_1$ will therefore either increase or decrease $\alpha(\p,\n)$,
 depending on how the increase in $\n_1$ changes the value of $\gcd\n$.
 In the sequel we always consider $d$ and $\n$ fixed,
 and $\p$ is considered fixed unless we explicitly take the limit $\p\to\infty$.

\subsection{Proof idea}
 Each shape is characterised by the locations of its down steps,
 which in turn are organised into fractures.
 In dimension $d=2$ these fractures are the paths on the torus which suggestively appear in Figure~\ref{fig_large_example}
 (as $\gcd\n=2$, there are two such paths in this example).
 We first analyse the uniform distribution on the set of shapes.
 As $\n$ remains fixed and $\p$ grows large,
 each path behaves like a random walk (conditioned to return to a specific value to account for boundary conditions).
 Such random walks become straight lines in the mesh limit,
 because the typical deviations of the random walk are of a lower order than the rescaling factor.
 The geometric behaviour of each random walk then depends mostly on its starting point,
 which may also be chosen uniformly at random.
 In this two-dimensional setup,
 it is straightforward to work out that two indepently chosen paths 
 do not intersect one another with high probability
 (because they are flat with high probability,
 which means that they can only intersect if the starting points are chosen 
 in a highly unfortunate manner.)
 Thus, the uniform distribution on the set of shapes 
 is very close to a distribution in which the different fractures 
 are identically and independently distributed.
 All of the above will be presented in arbitrary dimension,
 where random walks turning into straight lines become 
 random surfaces turning into flat hypersurfaces of codimension one.
 In the mesh limit,
 the starting points of the hypersurfaces are described by uniform random variables
 in the unit circle $\mathbb R/\mathbb Z$.
 This turns the discrete $d$-dimensional model 
 into a continuous one-dimensional model.
 Finally, we study the uniform distribution on the set of adjacent pairs 
 of shapes.
 This allows for an analysis similar to the analysis above,
 and in particular also yields a continuous one-dimensional description of the model in the mesh limit.
 This model is straightforwardly solved analytically to produce 
 the formula on the right in Theorem~\ref{main}.
 A substantial amount of effort is spent on demonstrating 
 that the diffusivity of the discrete process is preserved in the mesh limit,
 that is, that the diffusivity provided by the continuous model 
 is indeed equal to the limit on the left in the display in Theorem~\ref{main}.

\subsection{Overview}
Section~\ref{sec:combinatorial} studies the combinatorial structure of the random walk in terms of fractures.
Section~\ref{sec:embedding} describes how fractures are embedded in strips.
Strips turn out to be an effective tool in controlling the macroscopic behaviour
of fractures.
Section~\ref{sec:random} analyses the behaviour of a uniformly random fracture.
It uses ideas from~\cite{SELF} to demonstrate that fractures are asymptotically
\emph{flat}, in the sense that the minimal strips containing each fracture---as defined in Section~\ref{sec:embedding}---are thin in the mesh limit.
In Section~\ref{sec:process_decomposition_and_martingale_property},
the original random walk on height functions is
split into two processes: a martingale, and a sort of remainder process.
In Section~\ref{sec:final_proof},
we prove that the diffusivity of the remainder process is negligible in the mesh limit,
and we reduce the martingale to a one-dimensional system
with the desired diffusivity.

\subsection{Notation}
\label{subsec:notation}
It will be useful to introduce some basic tools.
These are all fixed, that is, they are allowed to depend only on the choice of the parameters $\p$ and $\n$.
Recall that we have already introduced the vector $\s:=\p+\n$, the linear isomorphism $\sigma$ of $\R^d$ that maps $\e_i$ to $\s_i\e_i$,
and the $\s$-torus $(\V,\E(\V))$ which is obtained
by taking the quotient of the square lattice $\Z^d$ with $\sigma\Z^d$.
If $f$ is a $\p\n$-function, then write $\hat f$ for its average:
\[
  \hat f:=\frac1{|\V|}\sum_{\x\in \V}f(\x).
\]
The difference $(X_n(\mathbf 0)-\hat X_n)_{n\geq 0}$
is bounded uniformly (for fixed $\p$ and $\n$),
and therefore one may replace $X(\mathbf 0)$ by $\hat X$
in the definition of $\alpha(\p,\n)$.


We shall write $(\a,\b)$ for the usual inner product of $\a,\b\in\R^d$,
and $\a^\perp$ for the hyperplane in $\R^d$ that is orthogonal to $\a$.
Of particular interest is $\n^\perp$,
and we shall write $N$ for $\n^\perp\cap\Z^d$:
the $\Z$-module of integral vectors orthogonal to $\n$.
The $\Z$-module $N$ spans $\n^\perp$ because the entries of $\n$ are integers.
We shall also write $\f$ for a fixed vector in $\Z^d$
such that $(\f,\n)=\gcd\n$.
Such a vector must exist by the definition of the greatest
common divisor.
Each
vector in $\Z^d$ can be written uniquely as the sum of
some vector $\a\in N$ and a vector of the form $b\f$ with $b\in\Z$.

On several occasions, it will be useful to move between the discrete $\s$-torus
and the \emph{unit torus},
that is, the set $\T:=\R^d/\Z^d$.
This is where the map $\sigma$ and its inverse play an important role.
We shall also sometimes work with the universal covers of these structures.
Define the map $\pi:\R^d\to\T$ as follows:
for $\x\in\R^d$, let
\[
    \pi(\x):=\sigma^{-1}\x+\Z^d\in\T.
\]
Note that $\pi^{-1}(\x)$ is automatically $\sigma\Z^d$-invariant
for any $\x\in\T$.

Write $\theta$ for the map
$\R^d\to\R^d,\x\mapsto\x+\sigma\f$.
We call $\theta$ a \emph{shift} because it simply translates each vector of $\R^d$ by $\sigma\f$; the latter is a vector with entries
$(\sigma\f)_i=\s_i\f_i$.
Finally, we shall conveniently abbreviate $\gcd\n$ to $|\n|$.


\section{The combinatorial structure of the random walk}
\label{sec:combinatorial}

This section formalises the notion of a fracture,
and we show that each shape can be represented by exactly $|\n|$ fractures.
This representation is then used to characterise the neighbourship relation $\sim$.
Finally, we demonstrate that the walk on $\p\n$-functions is
irreducible.

\subsection{Stepped surfaces and fractures}

Each of the two torus paths in Figure~\ref{fig_large_example} represents a fracture.
A fracture is essentially a hypersurface of codimension one which
separates two components which are connected by up steps.
The fracture itself consists of down steps.
There are two strategies by which fractures can be introduced formally.
First, one can define a graph structure on the set of down steps
(which serves as vertex set).
The fractures are then the connected components of this graph.
The connectivity of this graph is defined in terms of a natural local rule 
on the set of down steps.
Althought this strategy is perhaps more natural at first sight,
we prefer to work with another strategy
which very much simplifies the remainder of the analysis.
This second strategy is global,
in the sense that fractures are defined on the entire space directly without 
first defining a local rule and then seeing how this local rule permeates
throughout the space.
The global definition is natural after lifting each height function on the $\s$-torus to the universal 
cover $\mathbb Z^d$.

Before we proceed with the construction,
we mention that fractures are realised as \emph{stepped surfaces}.
In dimension two, a stepped surface can be thought of as a stack of unit squares 
such that its boundary is an infinite right-down path.
In dimension three, a stepped surface can be thought of as a stack of unit cubes 
whose boundary corresponds exactly to a lozenge tiling.
For an introduction into stepped surfaces as well as for some results that we shall appeal to here,
we refer to~\cite{SELF} and the references therein.

Each $\p\n$-function $f$ has the vertex set $\V$
of the $\s$-torus as its domain.
Stepped surfaces, however, are defined on the full square lattice $\mathbb Z^d$.
If $\tilde\x\in\Z^d$, then write $\x:=\tilde\x+\sigma\Z^d\in\V$
for the associated torus vertex.
Let us first use the map $\tilde\x\mapsto\x$
to lift $f$
to a function $\tilde f$ on $\Z^d$.

\begin{definition}
For $f$ a $\p\n$-function,
define the lift
$\tilde f$
by
\[
\tilde f:
\Z^d\to \Z,\,
\tilde\x\mapsto
 f(\x)
 -2(\tilde\x,\sigma^{-1}\n).
\]
\end{definition}

Note that the definition is as expected,
except that an extra correction term is introduced.
This correction term does not depend on $f$.
The purpose of the correction is to ensure that the discrete derivative 
of $\tilde f$ takes values in $\{-2,0\}$.
See Figure~\ref{fig_small_example} for an example of a height function and its lift.
By comparing the definition of $\tilde f$ with 
the definition of a $\p\n$-function in the introduction,
it is straightforward to derive the following proposition.

\begin{proposition}
  \label{propo_relation_f_tilde_f_derivative}
  If $f$
    is a  $\p\n$-function,
    then $\tilde f(\0)=f(\0)$, and for any $\tilde\x$
    and $i$,
\[\tilde f(\tilde\x+\e_i)-\tilde f(\tilde\x)=
-2\cdot 1(\nabla f(e)=-1)
=
-2\cdot 1(\text{$e$ is a down step for $f$}),
\]
where $e:=\{\x,\x+\e_i\}\in\torusedges$.
The function $\tilde f$ is the only function on $\mathbb{Z}^d$
with these properties,
and it is
 non-increasing in every coordinate.
\end{proposition}

Let us focus on the upper level sets
of the lift $\tilde f$.
Abbreviate $\{\x\in\Z^d:\tilde f(\x)\geq\lambda\}$
to $\{\tilde f\geq \lambda\}$ for any $\lambda\in\R$.
We shall soon see that such sets are \emph{stepped surfaces}
(which are yet to be defined).
We make the following simple preliminary observation, which is due entirely to
the fact that the discrete derivative of $\tilde f$
takes even values.

\begin{proposition}
  \label{propo_consider_only_even_for_lambda}
  We have $\{\{\tilde f\geq \lambda\}:\lambda\in\R\}=\{\{\tilde f\geq \lambda\}:\lambda\in a+2\Z\}$
  for any $\p\n$-function $f$ and for any $a\in\mathbb R$.
\end{proposition}

\begin{definition}
  A \emph{stepped surface} is a nonempty strict subset $A$
  of $\Z^d$
  such that $\mathbf x\in A$
  implies $\mathbf x-\mathbf e_i\in A$
  for every $1\leq i\leq d$.
  Informally, this means that each vertex in $A$ is well-supported by the vertices below it,
  which are also in $A$.
  If $M$ is a $\Z$-submodule of $\Z^d$,
  then
  the stepped surface $A$ is called \emph{$M$-invariant}
  if $A=A+\mathbf x$ for any $\mathbf x\in M$.
\end{definition}

If $\tilde f:\Z^d\to\R$ is an arbitrary function
that is non-increasing in every coordinate,
and $\lambda\in\R$,
then $\{\tilde f\geq\lambda\}$ is a stepped surface,
unless $\{\tilde f\geq\lambda\}\in\{\varnothing,\Z^d\}$.

\begin{lemma}
  \label{lemma_nature_of_pi_level_sets}
  If $f$ is a $\p\n$-function and $\lambda\in\R$,
  then $\{\tilde f\geq \lambda\}$ is a stepped surface.
  Moreover, if $\x\in\Z^d$,
  then
  $\{\tilde f\geq \lambda-2(\x,\n)\}=
  \{\tilde f\geq\lambda\}+\sigma\x
  $.
  In particular, $\{\tilde f\geq \lambda\}$ is a $\sigma N$-invariant stepped surface.
\end{lemma}

The lemma is quite natural:
the function $\tilde f$ is a lift of 
a function defined on the torus,
and this result expresses how the periodicity of the torus 
translates to symmetries of the upper level sets of $\tilde f$.  
In Figure~\ref{fig_small_example}, the set of vertices to the bottom-left of each grey line
is an upper level set and a stepped surface.

\begin{proof}[Proof of Lemma~\ref{lemma_nature_of_pi_level_sets}]
  We first claim that
  \begin{equation*}
    \tilde f (\y+\sigma\x)-\tilde f(\y)=-2(\x,\n)
  \end{equation*}
  whenever $\x,\y\in\Z^d$.
  The proof of the claim follows by integrating the discrete derivative 
  of $\tilde f$ along a path from $\y$ and $\y+\sigma\x$,
  and using the definition of a $\p\n$-function and
  Proposition~\ref{propo_relation_f_tilde_f_derivative}
  to see that this integral yields the expression on the right.
  The claim implies that $\tilde f$ takes arbitrarily small and
  large values,
  and consequently $\{\tilde f\geq \lambda\}\not\in\{\varnothing,\Z^d\}$.
  Therefore it must be a stepped surface.
  It follows from
  the claim
  that $\{\tilde f\geq \lambda-2(\x,\n))\}=
  \{\tilde f\geq \lambda\}+\sigma\x
  $
  for any $\x\in\Z^d$.
  Since $N\subset\Z^d$ was defined such that $(\x,\n)=0$
  for any $\x\in N$,
  this implies the remainder of the lemma.
\end{proof}

The previous lemma tells us that if $\{\tilde f\geq\lambda\}$ is a level set for $\tilde f$,
then so is $\{\tilde f\geq\lambda\}+\sigma\x$ for any $\x\in\Z^d$
(althought potentially for a different height $\lambda$ depending on $\x$).
Recall from Subsection~\ref{subsec:notation} that the set $\Z^d$ can be decomposed as the direct sum 
of the $\Z$-modules $N$ and $\mathbb Z\f$,
where $\mathbf f\in\mathbb Z^d$ is chosen such that
$(\mathbf f,\mathbf n)=|\n|:=\gcd\mathbf n$.
Recall also that $\theta$ denotes the shift $\mathbf x\mapsto \mathbf x+\sigma\mathbf f$.
Therefore $
\{\theta^kA:k\in\Z\}
=
\{A+\sigma\x:\x\in\Z^d\}
$
whenever $A$ is a $\sigma N$-invariant
stepped surface.
This motivates the formal definition of a fracture.

\begin{definition}
  A \emph{$\p\n$-fracture} or simply a \emph{fracture} is a set of stepped surfaces
  of the form $\{\theta^kA:k\in\Z\}$,
  where $A$ is a $\sigma N$-invariant stepped surface.
  Write $F(\p,\n)$ for the set of $\p\n$-fractures.
\end{definition}

We finally mention 
some simple relations between the different fractures corresponding to the same height function.

\begin{definition}
For any $A\subset\Z^d$,
write $\partial^* A$ for the \emph{vertex boundary} of $A$, that is, the set of vertices which are adjacent to $A$
in the square lattice.
If $A,B\subset \mathbb Z^d$,
then call $A$ a \emph{good subset}
of $B$, and write $A\sqsubset B$,
if  $A$ is a proper subset of $B$
with $\partial^* A\subset B$.
The relation $\sqsubset$ is a strict partial order on the subsets of $\Z^d$.
\end{definition}

\begin{lemma}
  \label{manystatementsaboutlevelsets}
  Let $f$ be a $\p\n$-function.
  Then all stepped surfaces of the form
  $\{\tilde f\geq \lambda\}$ are given by the chain
  \[\dots\sqsubset \{\tilde f\geq 4\}\sqsubset \{\tilde f\geq 2\}\sqsubset \{\tilde f\geq 0\}\sqsubset \{\tilde f\geq -2\}\sqsubset \{\tilde f\geq -4\}\sqsubset\cdots,\]
  where $\{\tilde f\geq \lambda-2k|\n|\}=\theta^k\{\tilde f\geq \lambda\}$ for each $k\in\Z$.
  In other words,
  the set of stepped surfaces $\{\{\tilde f\geq\lambda\}:\lambda\in\R\}$ can be written uniquely
  as the union of $|\n|$ distinct fractures,
  and for an appropriate enumeration $(F_1,\dots, F_{|\n|})$
  of these fractures and for an appropriate choice of representative stepped surface $A_i\in F_i$ for each fracture,
  the previous chain is identical to
  \[\dots\sqsubset\theta^{-1}A_{|\n|}\sqsubset A_1 \sqsubset A_2\sqsubset \dots\sqsubset A_{|\n|}\sqsubset \theta A_1\sqsubset \theta A_2\sqsubset \cdots.\]
\end{lemma}

\begin{proof}
  The relation $\{\tilde f\geq \lambda\}\sqsubset\{\tilde f\geq \lambda-2\}$
  for $\p\n$-functions $f$
  follows from the fact that the discrete derivative of $\tilde f$
  is bounded by $2$, see Proposition~\ref{propo_relation_f_tilde_f_derivative}.
  This proves the relations in the first display of the lemma;
  Proposition~\ref{propo_consider_only_even_for_lambda}
  asserts that the chain contains all stepped surfaces
  in $\{\{\tilde f\geq \lambda\}:\lambda\in\mathbb R\}$.
   Lemma~\ref{lemma_nature_of_pi_level_sets} asserts
  that
  $\{\tilde f\geq\lambda-2k|\n|\}=\theta^k\{\tilde f\geq \lambda\}$.
  For the rest of the lemma, the following choice of fractures
  and representative stepped surfaces
  suffices:
  define $A_i:=\{\tilde f\geq -2i\}$ and $F_i:=\{\theta^kA_i:k\in\mathbb Z\}$
  for $1\leq i\leq|\n|$.
  It is clear that the choice of fractures is unique, up to
  indexation.
\end{proof}

\subsection{Shapes}

\begin{definition}
  If $f$ is a $\p\n$-function,
  then
  the equivalence class
  $[f]$ defined by $[f]:=\{f+k:k\in\mathbb Z\}$
  is called the \emph{shape} of $f$.
  Write $S(\p,\n)$ for the set of shapes of $\p\n$-functions.
  Note that $S(\p,\n)$ is finite.
\end{definition}

The main purpose of this subsection is to prove the following lemma.
The lemma is the inverse statement of Lemma~\ref{manystatementsaboutlevelsets}:
it asserts that any consistent family of fractures corresponds to a unique shape.
Together, the two lemmas provide a bijection between shapes and families of fractures,
which is made explicit in Proposition~\ref{propo:bijection}.

\begin{lemma}
  \label{iffinethenheightfunction}
  Suppose given a collection of
  $|\n|$ distinct $\p\n$-fractures,
  such that for an appropriate enumeration $(F_i)_{1\leq i\leq|\n|}$ of these $|\n|$ fractures
  and for an appropriate choice of representatives $A_i\in F_i$,
  we have
  \[\dots\sqsubset\theta^{-1}A_{|\n|}\sqsubset A_1 \sqsubset A_2\sqsubset \dots\sqsubset A_{|\n|}\sqsubset \theta A_1\sqsubset \theta A_2\sqsubset \cdots.\]
  Then there is a $\p\n$-function $f$
  such that $\{\{\tilde f\geq \lambda\}:\lambda\in\R\}$
  equals $\cup_iF_i$.
  The choice of $f$ is unique up to an additive constant.
  In other words, the fractures $(F_i)_{i}$
  define a unique shape.
\end{lemma}

\begin{proof}
Define $A_{i+j|\n|}:=\theta^jA_i$ for each $1\leq i\leq|\n|$
and $j\in\Z$,
so that $(A_k)_{k\in\Z}=\cup_kF_k$
and $A_k\sqsubset A_{k+1}$ for each $k\in \Z$.
Clearly $\cap_kA_k=\varnothing$ and
$\cup_kA_k=\Z^d$,
so that the collection of sets $(B_k)_{k\in\Z}$ defined by $B_k:=A_k\smallsetminus A_{k-1}$ partitions $\Z^d$.
Define $\tilde g:\mathbb{Z}^d\to\mathbb{R}$ by
\[
  \tilde g
  :=
  \sum_{k\in\mathbb{Z}}-2k\cdot 1_{B_k}.
\]
It follows from the definition of $\tilde g$ that
$\{\tilde g\geq -2k \}= A_k$ for every $k\in\mathbb{Z}$.
It suffices to demonstrate that there exists a $\p\n$-function $f$ such that $\tilde f=\tilde g$.
For this it is enough to show that
\begin{enumerate}
  \item For any $\x\in\mathbb{Z}^d$ and $1\leq i\leq d$, we have $\tilde g(\x+\e_i)-\tilde g(\x)\in\{0,-2\}$,
  \item For any $\x,\y\in\mathbb{Z}^d$, we have $\tilde g(\x+\sigma\y)-\tilde g(\x)=-2(\y,\n)$.
\end{enumerate}

For the first statement, observe that
\[
  \tilde g(\x+\e_i)-\tilde g(\x)=-2|\{k\in\mathbb{Z}:\x\in A_k,\x+\e_i\not \in A_k\}|.
\]
The set in the display clearly contains at most
one element, since $\sqsubset$ is a strict
total order on $(A_k)_{k\in\Z}$.
Now focus on the second statement.
The vector  $\y\in\mathbb Z^d$ can be written
uniquely as the sum of some vector $\a\in N$
and of the vector $b\f$, where $b:=(\y,\n)/|\n|\in\mathbb Z$.
We see that
\[
  A_k+\sigma\y=A_k+\sigma\a+\sigma b\f
  =A_k+\sigma b\f
  =\theta^bA_k
  =A_{k+b|\n|}.
\]
The second equality is due to $\sigma N$-invariance of $A_k$;
the final equality follows from the definition of $(A_k)_{k\in\Z}$.
The equality also implies that
$B_k+\sigma\y=B_{k+b|\n|}$.
If $\x\in B_k$,
then $\x+\sigma\y\in B_{k+b|\n|}$,
and therefore
\[\tilde g(\x+\sigma\y)-\tilde g(\x)=-2(k+b|\n|)+2k=-2b|\n|=-2(\y,\n).\]
This finishes the proof of the second statement.
It is clear that these fractures define a unique---up yo constant---function
$\tilde g$, and therefore they determine a unique shape.
\end{proof}

Before proceeding, let us write down some equivalent definitions of a shape,
which follow straightforwardly from the definitions and from the previous lemma.

\begin{proposition}
  Let $f$ and $g$ be $\p\n$-functions.
  Then the following are equivalent:
  \begin{enumerate}
    \item The functions $f$ and $g$ have the same shape,
    \item The functions $\nabla f$ and $\nabla g$ are equal,
    \item The functions $\tilde f$ and $\tilde g$ differ by a constant,
    \item The set $\{\{\tilde f\geq\lambda\}:\lambda\in \mathbb{R}\}$ equals the set $\{\{\tilde g\geq\lambda\}:\lambda\in \mathbb{R}\}$.
  \end{enumerate}
\end{proposition}

The following proposition follows from Lemmas~\ref{manystatementsaboutlevelsets}
and~\ref{iffinethenheightfunction}.

\begin{proposition}
\label{propo:bijection}
  The set $S(\p,\n)$ is in bijection with
  the set
  \[
    \{
      F=(F_i)_i\subset F(\p,\n):
      \text{$|F|=|\n|$ and $\sqsubset$ restricts to
      a strict total order on $\cup_i F_i$}
    \}.
  \]
\end{proposition}

\begin{proof}
To apply the two lemmas, we need to verify one statement:
that the chain of Lemma~\ref{iffinethenheightfunction}
can be constructed from the collection of fractures $(F_i)_i$ whenever
$\sqsubset$ restricts to a strict total order on $\cup_iF_i$.
Let us induct on the number of fractures $n\leq|\n|$ for which we can
consistently construct the chain.
If $n=1$, then we only consider the first fracture $F_1$.
Write $A_1$ for some stepped surface in $F_1$.
Since we know (by assumption) that $\sqsubset$ restricts 
to a strict total order on $F_1=\{\theta^k A_1:k\in\mathbb Z\}$,
we must have $\theta^kA\sqsubset\theta^{k+1}A$ for each $k\in\mathbb Z$.
This proves the induction basis.
Now follows the induction step:
we consider the proposition proven for the first $n$ fractures,
and focus on the first $n+1$.
By induction, we can reorder the indices of the first $n$ fractures and select a representative 
stepped surface $A_i\in F_i$ for each fracture such that 
  \[
    \dots\sqsubset\theta^{-1}A_{n}\sqsubset A_1 \sqsubset A_2\sqsubset \dots\sqsubset A_{n}\sqsubset \theta A_1\sqsubset \theta A_2\sqsubset \cdots.
  \]
  The objective is to consistently fit the extra fracture $F_{n+1}$ into this chain.
  Select $A'\in F_{n+1}$.
  Note that $\theta^{-k}A_1\sqsubset A'\sqsubset \theta^kA_1$ for $k$ sufficiently large.
  Since $\sqsubset$ restricts to a total order on the union of the fractures,
  this means that $A'$ fits somewhere into the chain in the previous display.
  The other stepped surfaces in $F_{n+1}$ may be consistently 
  fit into the chain simply by observing that $B\sqsubset C$
  if and only if $\theta B\sqsubset\theta C$ for some arbitrary stepped surfaces 
  $B$ and $C$.
  It suffices to relabel the first $n+1$ fractures and representative stepped surfaces appropriately
  to obtain the chain in Lemma~\ref{iffinethenheightfunction} with the correct 
  labels.
\end{proof}


\subsection{The neighbourship relation}

Recall that two $\p\n$-functions $f$ and $g$ are neighbours 
if $|f-g|$ is identically equal to $1$,
and that we write $f\sim g$ in this case.
We first translate this relation to shapes.

\begin{definition}
  For $V,W\in S(\p,\n)$,
   write $V\sim W$,
  and say that $V$ and $W$ are \emph{neighbours},
  if there exist $\p\n$-functions $f\in V$ and $g\in W$
  such that $f\sim g$.
\end{definition}

%
%

We first state a simple observation which will be useful later.

\begin{proposition}
  \label{waarbenikmeebezig}
  Let $f$ be a height function and $W$ a shape.
  If $[f]=W$ then the neighbours of $f$ with shape $W$ are $f-1 $ and $f+1$.
  If $[f]\neq W$ then there exists at most one neighbour $g\sim f$
  with $g\in W$.
\end{proposition}

\begin{proof}
  The first statement is obvious.
  For the second statement,
  suppose that $W\sim [f]$, but $W\neq [f]$.
  If $g\in W$, then $f-g$ is not constant,
  and therefore there exists at most one $k\in\mathbb Z$
  such that $g+k-f$ takes values in $\{-1,1\}$.
  In that case, $g+k$ is the unique neighbour of $f$
  contained in $W$.
\end{proof}

The proposition says that the height difference $g-f$
is independent of the choice of the height functions to represent the shapes 
whenever the two shapes are neighbours of one another and distinct.
We further characterise this height difference in the following lemma.


\begin{lemma}
  \label{awinbwanoafbw}
  Let $V$ and $W$ be shapes, say
  with $f\in V$
  and $g\in W$.
  Then $V\sim W$ if and only if
  \begin{equation*}
    \cdots\subset A_{-1} \subset B_{-1} \subset A_{0} \subset B_{0} \subset A_{1} \subset B_{1} \subset \cdots
  \end{equation*}
  for some indexations $(A_k)_{k\in\mathbb{Z}}$
  and $(B_k)_{k\in\mathbb{Z}}$
  respectively
  of the sets $\{\{\tilde f\geq\lambda\}:\lambda\in\mathbb R\}$
  and $\{\{\tilde g\geq\lambda\}:\lambda\in\mathbb R\}$
  with $A_{k}\sqsubset A_{k+1}$ and
   $B_{k}\sqsubset B_{k+1}$ for each $k$.
  Now suppose that $V\sim W$ with $V\neq W$,
  and pick $f\in V$ and $g\in W$ such that $f\sim g$.
  Define
  \[
    \V(fg):=\cup_k\{\x:\text{$\tilde\x\in A_k\smallsetminus B_{k-1}$}\},\qquad
    \V(gf):=\cup_k\{\x:\text{$\tilde\x\in B_k\smallsetminus A_k$}\}.
  \]
  Then $\{\V(fg),\V(gf)\}$ is a partition of
  $\V$, and
   $g-f=1_{\V(gf)}-1_{\V(fg)}$.
\end{lemma}

\begin{figure}
    \begin{center}

      \begin{subfigure}{.35\textwidth}
        \centering
        \begin{tikzpicture}
          \begin{scope}
\clip (-1.125, -1.125) -- (3.375, -1.125) -- (3.375, 3.375) -- (-1.125, 3.375) -- cycle;
\draw[very thick] (-1.125, -0.75) -- (3.375, -0.75);
\draw[very thick] (-0.75, -1.125) -- (-0.75, 3.375);
\draw[very thick] (-1.125, 0.0) -- (3.375, 0.0);
\draw[very thick] (0.0, -1.125) -- (0.0, 3.375);
\draw[very thick] (-1.125, 0.75) -- (3.375, 0.75);
\draw[very thick] (0.75, -1.125) -- (0.75, 3.375);
\draw[very thick] (-1.125, 1.5) -- (3.375, 1.5);
\draw[very thick] (1.5, -1.125) -- (1.5, 3.375);
\draw[very thick] (-1.125, 2.25) -- (3.375, 2.25);
\draw[very thick] (2.25, -1.125) -- (2.25, 3.375);
\draw[very thick] (-1.125, 3.0) -- (3.375, 3.0);
\draw[very thick] (3.0, -1.125) -- (3.0, 3.375);
\draw[very thin] (-1.125, -0.375) -- (3.375, -0.375);
\draw[very thin] (-1.125, 2.625) -- (3.375, 2.625);
\draw[very thin] (2.625, -1.125) -- (2.625, 3.375);
\draw[very thin] (-0.375, -1.125) -- (-0.375, 3.375);
\draw[black!33, very thick] (-1.125, 1.125) -- (1.125, 1.125) -- (1.125, -1.125);
\draw[black!33, very thick] (1.125, 3.375) -- (1.125, 1.875) -- (1.875, 1.875) -- (1.875, 1.125) -- (3.375, 1.125);
\draw[black!66, very thick] (-1.125, 0.375) -- (0.375, 0.375) -- (0.375, -1.125);
\draw[black!66, very thick] (0.375, 3.375) -- (0.375, 1.125) -- (1.875, 1.125) -- (1.875, 0.375) -- (3.375, 0.375);
\draw[gray, fill=white] (-0.75, -0.75) circle (0.3);
\node at (-0.75, -0.75) {$+$};
\draw[gray, fill=white] (-0.75, 0.0) circle (0.3);
\node at (-0.75, 0.0) {$+$};
\draw[gray, fill=white] (-0.75, 0.75) circle (0.3);
\node at (-0.75, 0.75) {$-$};
\draw[gray, fill=white] (-0.75, 1.5) circle (0.3);
\node at (-0.75, 1.5) {$+$};
\draw[gray, fill=white] (-0.75, 2.25) circle (0.3);
\node at (-0.75, 2.25) {$+$};
\draw[gray, fill=white] (-0.75, 3.0) circle (0.3);
\node at (-0.75, 3.0) {$+$};
\draw[gray, fill=white] (0.0, -0.75) circle (0.3);
\node at (0.0, -0.75) {$+$};
\draw[black, fill=white] (0.0, 0.0) circle (0.3);
\node at (0.0, 0.0) {$+$};
\draw[black, fill=white] (0.0, 0.75) circle (0.3);
\node at (0.0, 0.75) {$-$};
\draw[black, fill=white] (0.0, 1.5) circle (0.3);
\node at (0.0, 1.5) {$+$};
\draw[black, fill=white] (0.0, 2.25) circle (0.3);
\node at (0.0, 2.25) {$+$};
\draw[gray, fill=white] (0.0, 3.0) circle (0.3);
\node at (0.0, 3.0) {$+$};
\draw[gray, fill=white] (0.75, -0.75) circle (0.3);
\node at (0.75, -0.75) {$-$};
\draw[black, fill=white] (0.75, 0.0) circle (0.3);
\node at (0.75, 0.0) {$-$};
\draw[black, fill=white] (0.75, 0.75) circle (0.3);
\node at (0.75, 0.75) {$-$};
\draw[black, fill=white] (0.75, 1.5) circle (0.3);
\node at (0.75, 1.5) {$-$};
\draw[black, fill=white] (0.75, 2.25) circle (0.3);
\node at (0.75, 2.25) {$-$};
\draw[gray, fill=white] (0.75, 3.0) circle (0.3);
\node at (0.75, 3.0) {$-$};
\draw[gray, fill=white] (1.5, -0.75) circle (0.3);
\node at (1.5, -0.75) {$+$};
\draw[black, fill=white] (1.5, 0.0) circle (0.3);
\node at (1.5, 0.0) {$+$};
\draw[black, fill=white] (1.5, 0.75) circle (0.3);
\node at (1.5, 0.75) {$+$};
\draw[black, fill=white] (1.5, 1.5) circle (0.3);
\node at (1.5, 1.5) {$-$};
\draw[black, fill=white] (1.5, 2.25) circle (0.3);
\node at (1.5, 2.25) {$+$};
\draw[gray, fill=white] (1.5, 3.0) circle (0.3);
\node at (1.5, 3.0) {$+$};
\draw[gray, fill=white] (2.25, -0.75) circle (0.3);
\node at (2.25, -0.75) {$+$};
\draw[black, fill=white] (2.25, 0.0) circle (0.3);
\node at (2.25, 0.0) {$+$};
\draw[black, fill=white] (2.25, 0.75) circle (0.3);
\node at (2.25, 0.75) {$-$};
\draw[black, fill=white] (2.25, 1.5) circle (0.3);
\node at (2.25, 1.5) {$+$};
\draw[black, fill=white] (2.25, 2.25) circle (0.3);
\node at (2.25, 2.25) {$+$};
\draw[gray, fill=white] (2.25, 3.0) circle (0.3);
\node at (2.25, 3.0) {$+$};
\draw[gray, fill=white] (3.0, -0.75) circle (0.3);
\node at (3.0, -0.75) {$+$};
\draw[gray, fill=white] (3.0, 0.0) circle (0.3);
\node at (3.0, 0.0) {$+$};
\draw[gray, fill=white] (3.0, 0.75) circle (0.3);
\node at (3.0, 0.75) {$-$};
\draw[gray, fill=white] (3.0, 1.5) circle (0.3);
\node at (3.0, 1.5) {$+$};
\draw[gray, fill=white] (3.0, 2.25) circle (0.3);
\node at (3.0, 2.25) {$+$};
\draw[gray, fill=white] (3.0, 3.0) circle (0.3);
\node at (3.0, 3.0) {$+$};
\end{scope}
        \end{tikzpicture}
        \subcaption{}
      \end{subfigure}
      \qquad
      \begin{subfigure}{.35\textwidth}
        \centering
        \begin{tikzpicture}
          \begin{scope}
\clip (-0.375, -1.125) -- (2.625, -1.125) -- (2.625, 3.375) -- (-0.375, 3.375) -- cycle;
\begin{scope}
\clip (-0.375, -0.375) -- (2.625, -0.375) -- (2.625, 2.625) -- (-0.375, 2.625) -- cycle;
\fill[RoyalBlue!50] (-1.125, -1.125) -- (3.375, -1.125) -- (3.375, 3.375) -- (-1.125, 3.375) -- cycle;
\draw[ultra thin, fill=RoyalBlue!25] (-1.125, 1.125) -- (1.125, 1.125) -- (1.125, -1.125) -- (0.375, -1.125) -- (0.375, 0.375) -- (-1.125, 0.375) -- cycle;
\draw[ultra thin, fill=RoyalBlue!25] (1.125, 3.375) -- (1.125, 1.875) -- (1.875, 1.875) -- (1.875, 1.125) -- (3.375, 1.125) -- (3.375, 0.375) -- (1.875, 0.375) -- (1.875, 1.125) -- (0.375, 1.125) -- (0.375, 3.375) -- cycle;
\end{scope}

\end{scope}

\draw (-0.375, -0.375) -- (2.625, -0.375) -- (2.625, 2.625) -- (-0.375, 2.625) -- cycle;
        \end{tikzpicture}
        \subcaption{}
      \end{subfigure}

      \caption
    {
      One the left: the difference $\tilde g-\tilde f$ for $f$ and $g$ as in Figure~\ref{fig_small_example}.
      The symbols $\pm$ represent a height difference of $\pm1$.
      Observe that the height difference is constant on each area enclosed by two grey lines.
      These areas correspond exactly to the sets $A_k\smallsetminus B_{k-1}$
      and $B_k\smallsetminus A_k$ in Lemma~\ref{awinbwanoafbw}.
      Remark that the grey lines of different colours may ``touch''
      each other, but that they cannot fully ``cross''.
      On the right: the sets $\mathbb T(gf),\mathbb T(fg)\subset\mathbb T$
      in dark blue and light blue respectively.
      (These sets are defined in Lemma~\ref{lemma:first_volume_lemma}.)
      These sets are obtained by replacing the $+$ and $-$ vertices on the left respectively by 
      unit hypercubes, and mapping the sets so obtained to the torus through 
      the map $\pi$.
    }
    \label{fig_small_neighbour_example}
    \end{center}
  \end{figure}

\begin{proof}
  First assume that $f\sim g$ for some $f\in V$ and $g\in W$.
  Then $\tilde g-\tilde f$ takes values in $\{-1,1\}$
  and therefore
  \[\cdots \subset \{\tilde f\geq 2\}\subset \{\tilde g\geq 1\}\subset \{\tilde f\geq 0\}\subset \{\tilde g\geq -1\}\subset \{\tilde f\geq -2\}\subset \cdots,\]
  that is, we may take $A_k=\{\tilde f\geq -2k\}$
  and $B_k=\{\tilde g\geq -2k-1\}$.

Now focus on the converse statement,
and suppose that the appropriate indexations
of $\{\{\tilde f\geq\lambda\}:\lambda\in\mathbb R\}$
and $\{\{\tilde g\geq\lambda\}:\lambda\in\mathbb R\}$
exist.
Then we may assume, by adding constants to $f$ and $g$,
that
$A_k=\{\tilde f\geq -2k\}$ with $f(\mathbf 0)$ even,
and that $B_k=\{\tilde g\geq -2k-1\}$ with $g(\mathbf 0)$ odd.
This implies that $\tilde f-\tilde g$ takes values in $\{-1,1\}$,
that is, $f$ and $g$ are neighbours.

In fact, if $\tilde g-\tilde f$ is not constant, then the same construction implies that
$\tilde g-\tilde f$ equals $1$ on the set
$\cup_k(B_k\smallsetminus A_k)$,
and $-1$
on the set $\cup_k (A_k\smallsetminus B_{k-1})$;
see Figure~\ref{fig_small_neighbour_example}.
This implies immediately that
$g-f=1_{\V(gf)}-1_{\V(fg)}$.
Since $g-f$ takes values in
$\{-1,1\}$, this also implies
that $\{\V(fg),\V(gf)\}$ is a partition of $\V$.
\end{proof}

\subsection{Irreducibility of the random walk}

The following lemma can be considered isolated from the rest of the analysis in this article,
but it is required to prove Theorem~\ref{main}.

\begin{lemma}
  The random walk on $\mathbf p\mathbf n$-periodic height functions is irreducible.
\end{lemma}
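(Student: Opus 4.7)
The plan is to give, for any pair of height functions $f$ and $g$ with $f(\0), g(\0)\in\Z$, an explicit sequence of neighbor moves from $f$ to $g$. The scheme is first to lift $f$ pointwise above $g$ via uniform $+1$ shifts, then to push it back down level by level using a non-uniform move followed by a uniform $-1$ shift.

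First I would note that $h := f - g$ is integer-valued with differences $h(\x+e_i)-h(\x)\in\{-2,0,2\}$, hence of constant parity $p := h(\0) \bmod 2$. Both $f \mapsto f+1$ and $f \mapsto f-1$ are always valid neighbor moves, since the gradients of $f\pm 1$ coincide with those of $f$. Iterating the first move $C$ times, I reach $\phi_0 := f + C$; choosing $C$ a sufficiently large integer of parity $p$, I can arrange that $\phi_0 \geq g$ pointwise and that $\phi_0 - g$ is even everywhere.

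The main ingredient is a \emph{descent move}: given any height function $\phi$ with $\phi \geq g$ and $\phi - g$ even, set $V := \{\phi > g\}$ and define $\epsilon(\x) := -1$ on $V$, $\epsilon(\x) := +1$ on $V^c$. I claim that $\phi + \epsilon$ is again a valid height function. The hard part of the proof is this verification. On an edge $(\x,i)$ where $\epsilon$ jumps from $-1$ to $+1$, the evenness of $\phi - g$ and the bound $|(\phi-g)(\x+e_i)-(\phi-g)(\x)|\leq 2$ force $(\phi-g)(\x)=2$ and $(\phi-g)(\x+e_i)=0$; hence $\phi(\x+e_i)-\phi(\x) = g(\x+e_i)-g(\x) - 2$, and the height function condition for $\phi$ pins this difference to its lowest allowed value, so $(\x,i)$ is a down step of $\phi$. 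This is exactly the condition needed for the $\epsilon$-perturbed increment to remain in the allowed set. The symmetric argument handles jumps of $\epsilon$ in the opposite direction (which are forced onto up steps of $\phi$), so $\phi+\epsilon$ is a height function and thus a valid neighbor of $\phi$.

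Composing the descent move with the uniform downshift $\phi+\epsilon \mapsto \phi+\epsilon - 1$ produces a net change of $-2$ on $V$ and $0$ on $V^c$. This preserves the invariants $\phi \geq g$ and $\phi - g$ even, while strictly decreasing $\max(\phi - g)$ by $2$. Iterating this two-move pair at most $\tfrac12\max(\phi_0 - g)$ times brings $\phi$ to $g$, completing the proof. The main obstacle is the technical verification of the descent move's validity described above; everything else is routine.
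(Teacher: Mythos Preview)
Your proof is correct, but it follows a genuinely different route from the paper's.

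The paper argues by a potential function on shapes: it fixes a distinguished shape $A^*$, defines $\chi(f)=\sum_{\x}(f(\x)-f(\0))$, and shows that whenever $\nu(f)\neq A^*$ one can locate a ``corner'' vertex $\mathbf{z}\neq\0$ (a local maximum of $f$ in both coordinate directions) at which the single-site flip $g(\x)=f(\x)+1-2\cdot 1_{\x=\mathbf{z}}$ is a legal neighbour with $\chi(g)<\chi(f)$. Finding $\mathbf{z}$ uses the square condition on $\nu(f)$ in a fairly hands-on way. Since all height functions of shape $A^*$ are connected by uniform shifts, this yields connectedness.

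Your argument is a monotone coupling: lift $f$ above $g$ by uniform shifts, then repeatedly apply the global move $\phi\mapsto\phi+\epsilon$ with $\epsilon=-1$ on $\{\phi>g\}$ and $\epsilon=+1$ on $\{\phi=g\}$, followed by a uniform $-1$. The key verification---that crossing edges of $\{\phi>g\}$ are forced to be down (resp.\ up) steps of $\phi$---is exactly right and uses only that $\phi-g$ is even with $|\nabla(\phi-g)|\leq 2$. This approach is dimension-free (it nowhere uses $d=2$ or the square condition), avoids singling out a special shape, and is arguably more robust; the paper's approach, by contrast, gives slightly more structural information by exhibiting an explicit local move and a sink shape $A^*$ for the potential $\chi$.
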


\begin{proof}
  We need to show that the graph of $\p\n$-functions is connected.
  If $f$ and $g$ differ by a constant, then
  $f$ and $g$ are clearly connected, since $f$ is a neighbour of $f+1$,
  which is a neighbour of $f+2$, et cetera.
  We need to show that any two shapes are connected.
  For any height function $f$, define
  \[\chi(f):=\sum\nolimits_{\x\in\V} (f(\x)-f(\0)).\]
  This function is constant on shapes,
  and there is a unique shape that minimises $\chi$;
  it is the unique shape $A^*=[f^*]$ such that
  \[\nabla f^*(\{\x,\x+\mathbf e_i\})=
\begin{cases}
  -1&\text{ if $0\leq \x_i <\n_i$,}\\
  1&\text{ if $\n_i\leq  \x_i <\t_i$.}
\end{cases}
  \]
  It is straightforward to check that the $\p\n$-function $f^*$ exists and minimises $\chi$.
  It suffices to find for every  $f\not\in A^*$
  a neighbour $g\sim f$ such that $\chi (g)<\chi(f)$.
  Note that $A^*$ is the unique shape such that $\0$ is the only local maximum
  of $f^*$ in the torus graph---where by local maximum we mean that all values that $f^*$
  takes on the neighbours of $\0$ are smaller than $f^*(\0)$.
  Let $f\not\in A^*$.
  Then $f$ must have some local maximum $\x\in \V\smallsetminus \{\0\}$ with respect to the torus graph.
  Then $g:=f+1-2\cdot 1_{\{\x\}}$ is a neighbour of $f$,
  and $\chi(g)=\chi(f)-2$.
\end{proof}


\section{Embedding fractures in strips}
\label{sec:embedding}

Assume the setting of Lemma~\ref{awinbwanoafbw},
and recall that $\hat f$ denotes the average height of a $\p\n$-function $f$.
Lemma~\ref{awinbwanoafbw} implies that
the difference $\hat g-\hat f$ can be expressed
directly in terms of the sets $\V(fg)$ and $\V(gf)$.
More precisely, we have
$\hat g-\hat f=(|\V(gf)|-|\V(fg)|)/|\V|$.
This is the starting point
of a geometrical construction which
will eventually enable us to
approximate the process $\hat X$ by a martingale.

We shall do so as follows.
First, we replace the partition $\{\V(fg),\V(gf)\}$
by a partition of the unit torus $\mathbb T=\mathbb R^d/\mathbb Z^d$,
so that the cardinalities in the previous expression for $\hat g-\hat f$ 
can be replaced by the Lebesgue measures of these new sets.
These new sets are the natural blowups of $\V(gf)$ and $\V(fg)$,
appropriately scaled to fit the unit torus.
Second, we modify the new sets slightly so that the resulting process becomes a martingale.
We finally show that the modification does not affect the diffusivity in the limit.

The formal procedure of introducing the new partition is slightly technical,
although natural in spirit and illustrated by Figures~\ref{fig_small_neighbour_example} and~\ref{fig_STRIPS}.

\begin{definition}
Write $K=[-\frac12,\frac12]^d$ for a unit cube of $\mathbb R^d$ centred at $\0$.
If $A$ is a stepped surface
then we write $A+K:=\cup_{\x\in A}K+\x$;
this is the set $A$ with each vertex replaced
by a unit cube centred at that vertex.
Define $\partial A:=\partial(A+K)$,
the topological boundary of $A+K$.
The topological boundary $\partial A$ of a 
stepped surface $A$ associated to a height function
is represented by the grey lines on the right in 
Figure~\ref{fig_small_example}.
Recall the definition of the projection map $\pi$ from Subsection~\ref{subsec:notation},
and bring $\partial A$
to the unit torus $\T$ by 
defining $\partial_\T A:=\pi(\partial A)$.
Remark that the set $\partial_\T A$
is independent of the choice of $A\in F$
for a fixed fracture $F$,
and therefore we may rightfully write
$\partial_\T F:=\partial_\T A$.
\end{definition}

It is straightforward to work out from the geometrical picture (cf.~Figure~\ref{fig_small_example}) that $\partial A$
may be obtained from $A$ by
replacing each edge $\{\x,\x+\e_i\}$
that satisfies $\x\in A$ and $\x+\e_i\not\in A$,
with a unit hypercube of codimension one,
orthogonal to the edge $\{\x,\x+\e_i\}$,
and centred at $\x+\e_i/2$.
This implies in particular that the Hausdorff distance from $\partial^* A$
to $\partial A$ is bounded by $\sqrt d$.
Moreover, the set $\partial A$ is connected.

\begin{lemma}
  \label{lemma:first_volume_lemma}
Assume the setting of Lemma~\ref{awinbwanoafbw}.
If $f$ and $g$ are neighbours of different shape,
then define
\[
  \T(fg):=\cup_k\pi((A_k+K)\smallsetminus(B_{k-1}+K)),~
  \T(gf):=\cup_k\pi((B_k+K)\smallsetminus(A_k+K)).
\]
Then $\{\T(fg),\T(gf)\}$ is a partition of $\T$,
and $\hat g-\hat f=\operatorname{Vol}(\T(gf))-\operatorname{Vol}(\T(fg))$.
\end{lemma}


\newcommand{\pluscolour}{RoyalBlue!50}
\newcommand{\minuscolour}{RoyalBlue!25}

\newcommand{\pluscolouralt}{Sepia!30}
\newcommand{\minuscolouralt}{Sepia!10}


\newcommand{\alphaone}{
  (7.5,0)--
  (7.5,1.5)--
  (6.5,1.5)--
  (6.5,3.5)--
  (3.5,3.5)--
  (3.5,4.5)--
  (2.5,4.5)--
  (2.5,5.5)--
  (1.5,5.5)--
  (1.5,6.5)--
  (0,6.5)
}
\newcommand{\alphaonereverse}{
  (0,6.5)--
  (1.5,6.5)--
  (1.5,5.5)--
  (2.5,5.5)--
  (2.5,4.5)--
  (3.5,4.5)--
  (3.5,3.5)--
  (6.5,3.5)--
  (6.5,1.5)--
  (7.5,1.5)--
  (7.5,0)
}
\newcommand{\alphatwo}{
  (25,6.5)--
  (23.5,6.5)--
  (23.5,7.5)--
  (22.5,7.5)--
  (22.5,9.5)--
  (21.5,9.5)--
  (21.5,11.5)--
  (20.5,11.5)--
  (20.5,11.5)--
  (18.5,11.5)--
  (18.5,12.5)--
  (17.5,12.5)--
  (17.5,15.5)--
  (16.5,15.5)--
  (16.5,17.5)--
  (14.5,17.5)--
  (14.5,18.5)--
  (13.5,18.5)--
  (13.5,21.5)--
  (12.5,21.5)--
  (12.5,22.5)--
  (9.5,22.5)--
  (9.5,23.5)--
  (7.5,23.5)--
  (7.5,25)
}
\newcommand{\alphatworeverse}{
  (7.5,25)--
  (7.5,23.5)--
  (9.5,23.5)--
  (9.5,22.5)--
  (12.5,22.5)--
  (12.5,21.5)--
  (13.5,21.5)--
  (13.5,18.5)--
  (14.5,18.5)--
  (14.5,17.5)--
  (16.5,17.5)--
  (16.5,15.5)--
  (17.5,15.5)--
  (17.5,12.5)--
  (18.5,12.5)--
  (18.5,11.5)--
  (20.5,11.5)--
  (20.5,11.5)--
  (21.5,11.5)--
  (21.5,9.5)--
  (22.5,9.5)--
  (22.5,7.5)--
  (23.5,7.5)--
  (23.5,6.5)--
  (25,6.5)
}
\newcommand{\betaone}{
  (18.5,0)--
  (18.5,1.5)--
  (15.5,1.5)--
  (15.5,2.5)--
  (14.5,2.5)--
  (14.5,3.5)--
  (13.5,3.5)--
  (13.5,4.5)--
  (10.5,4.5)--
  (10.5,5.5)--
  (9.5,5.5)--
  (9.5,7.5)--
  (8.5,7.5)--
  (8.5,9.5)--
  (6.5,9.5)--
  (6.5,10.5)--
  (5.5,10.5)--
  (5.5,13.5)--
  (4.5,13.5)--
  (4.5,14.5)--
  (2.5,14.5)--
  (2.5,16.5)--
  (0,16.5)
}
\newcommand{\betatwo}{
  (25,16.5)--
  (23.5,16.5)--
  (23.5,17.5)--
  (22.5,17.5)--
  (22.5,20.5)--
  (21.5,20.5)--
  (21.5,21.5)--
  (19.5,21.5)--
  (19.5,22.5)--
  (18.5,22.5)--
  (18.5,25)
}
\newcommand{\betatworeverse}{
  (18.5,25)--
  (18.5,22.5)--
  (19.5,22.5)--
  (19.5,21.5)--
  (21.5,21.5)--
  (21.5,20.5)--
  (22.5,20.5)--
  (22.5,17.5)--
  (23.5,17.5)--
  (23.5,16.5)--
  (25,16.5)
}
\newcommand{\betabarone}{
  (0,24.5)--
  (0.5,24.5)--
  (0.5,23.5)--
  (1.5,23.5)--
  (1.5,20.5)--
  (2.5,20.5)--
  (2.5,19.5)--
  (4.5,19.5)--
  (4.5,18.5)--
  (5.5,18.5)--
  (5.5,14.5)--
  (8.5,14.5)--
  (8.5,13.5)--
  (9.5,13.5)--
  (9.5,12.5)--
  (10.5,12.5)--
  (10.5,11.5)--
  (13.5,11.5)--
  (13.5,10.5)--
  (14.5,10.5)--
  (14.5,8.5)--
  (15.5,8.5)--
  (15.5,6.5)--
  (17.5,6.5)--
  (17.5,5.5)--
  (18.5,5.5)--
  (18.5,2.5)--
  (19.5,2.5)--
  (19.5,1.5)--
  (21.5,1.5)--
  (21.5,0)
}
\newcommand{\betabartwo}{
  (21.5,25)--
  (21.5,24.5)--
  (25,24.5)
}

\begin{figure}
\begin{center}
\begin{tikzpicture}[x=1cm,y=1cm,scale=0.15]

\newcommand{\whitecirclelabel}[5]{
  \fill[color=white] (#1,#2) circle (#4);
  \node[#5] at (#1,#2) {#3};
}
\newcommand{\fillplus}[1]{
  \draw[fill=\pluscolour] #1;
}
\newcommand{\plussquare}{
  \color{\pluscolour}
  \blacksquare
  \color{black}
}
\newcommand{\fillminus}[1]{
  \draw[fill=\minuscolour]#1;
}
\newcommand{\minussquare}{
  \color{\minuscolour}
  \blacksquare
  \color{black}
}
\newcommand{\dota}[2]{
  \fill (#1,#2) circle [radius=0.25];
}

\begin{scope}[shift={(0,0)}]
  \begin{scope}
    \clip(0,0)rectangle(25,25);

    \fillminus{(0,0)--\alphaone--cycle}
    \fillplus{\alphaonereverse--\betaone--cycle}
    \fillminus{\betaone--(0,25)--\alphatworeverse--(25,0)--cycle}
    \fillplus{\betatwo--\alphatworeverse--cycle}
    \fillminus{\betatwo--(25,25)--cycle}

  \end{scope}
  \draw(0,0)rectangle(25,25);

  \node[anchor=south] at (12.5,25) {$\hat g-\hat f$};
  \node[anchor=north west] at (0,0) {$
  \begin{aligned}
    &\minussquare~\T(fg)\\
    &\plussquare~\T(gf)
  \end{aligned}
  $};

\end{scope}

\begin{scope}[shift={(27,0)}]
  \begin{scope}
    \clip(0,0)rectangle(25,25);

    \draw \alphaone;
    \draw \alphatwo;
    \draw \betaone;
    \draw \betatwo;

    \fillminus{(0,0)--(0,5)--(5,0)--cycle}
    \fillminus{(0,20)-- (0,25)--(5,25)--(25,5)--(25,0)--(20,0)-- cycle}
    \fillminus{(25,25)-- (25,20)--(20,25)-- cycle}
    \fillplus{(10,0)-- (15,0)--(0,15)--(0,10)-- cycle}
    \fillplus{(15,25)-- (10,25)--(25,10)--(25,15)-- cycle}

    \draw(10,0)-- (15,0)--(0,15)--(0,10)-- cycle;
    \draw(15,25)-- (10,25)--(25,10)--(25,15)-- cycle;

  \end{scope}
  \draw (0,0) -- (25,0) -- (25,25) -- (0,25) -- cycle;
  \node[anchor=south] at (12.5,25) {$\hat g+\kappa([g])-\hat f-\kappa([f])$};
  \node[anchor=north west] at (0,0) {$
  \begin{aligned}
    &\minussquare~\T(fg)\smallsetminus U\\
    &\plussquare~\T(gf)\smallsetminus U
  \end{aligned}
  $};

\end{scope}

\begin{scope}[shift={(54,0)}]
  \begin{scope}
    \clip(0,0)rectangle(25,25);

  \fillminus{(5,0)--\alphaone--(0,5)--cycle}
  \fillplus{\alphaone -- (0,10)--(10,0) -- cycle}

  \draw \betaone -- (0,15)--(15,0) -- cycle;
  \draw \betaone -- (0,20)--(20,0) -- cycle;

  \fillminus{(25,5)-- \alphatwo --(5,25)-- cycle}
  \fillplus{\alphatwo -- (10,25)-- (25,10)-- cycle}

  \draw \betatwo --(15,25) -- (25,15)-- cycle;
  \draw \betatwo--(20,25)  -- (25,20)-- cycle;

  \end{scope}
  \draw (0,0) -- (25,0) -- (25,25) -- (0,25) -- cycle;
  \node[anchor=south] at (12.5,25) {$\kappa([f])$};
  \node[anchor=north west] at (0,0) {$
  \begin{aligned}
    &\minussquare~\T(fg)\cap U_f\\
    &\plussquare~\T(gf)\cap U_f
  \end{aligned}
  $};

\end{scope}

\node at (4,20) [anchor=south] {$\partial_\T F_1$};
\draw [->] (4,20) to [bend left=5] (1,7);
\draw [->] (4,20) to [bend right=5] (13,20.5);
\node at (21,5) [anchor=north] {$\partial_\T G_1$};
\draw [->] (21,5) to [bend right=5] (10,6.5);
\draw [->] (21,5) to [bend left=5] (24,16);

%
%
%



\end{tikzpicture}

\caption{
For each subfigure, the value of its label equals $\operatorname{Vol}(\color{\pluscolour}
\blacksquare
\color{black})-\operatorname{Vol}(\color{\minuscolour}
\blacksquare
\color{black})$.}
\label{fig_STRIPS}
\end{center}
\end{figure}

See Figure~\ref{fig_small_neighbour_example} for the construction of the sets
$\T(fg)$ and $\T(gf)$. 
See the leftmost subfigure in Figure~\ref{fig_STRIPS}
for the equation in the lemma.

\begin{proof}[Proof of Lemma~\ref{lemma:first_volume_lemma}]
  It is a simple exercise to work out that the sets $\T(fg)$ and $\T(gf)$
  are obtained from $\V(fg)$ and $\V(gf)$ respectively as follows:
  first replace each vertex $\x\in \V$
  by the cube $K+\x\subset \R^d/\sigma\Z^d$,
  then apply the map $\sigma^{-1}$.
  The volume of each cube $K$
  equals one;
  the map $\sigma^{-1}$ has determinant $1/|\V|$.
\end{proof}

With the geometric picture at hand, we can start reducing to the one-dimensional model that was mentioned in the introduction.
The immediate goal is to move from the leftmost picture in Figure~\ref{fig_STRIPS}
to the rightmost picture.
Essentially, the set $U_f$ in the rightmost picture is obtained from 
$\partial_\mathbb TF_1$ by associating to $\partial_\mathbb TF_1$ the smallest
\emph{cylinder}, where a cylinder is a subset of the torus invariant under certain 
translations.
The natural way to obtain such a set is by first projecting the set to a smaller space,
then taking the preimage of the projection.
We shall perform this operation in the universal cover rather than 
in the torus so that we may work with linear maps.
To this end, we introduce the linear projection map $P$ which projects $\R^d$
onto $\R$ along the subspace $\sigma\n^\perp$.

\begin{definition}
  Write $P:\R^d\to\R$ for the unique linear map
  such that $P(\e_1)=1$ and $\operatorname{Ker}P=\sigma\n^\perp$.
  If $A$ is a stepped surface, then define
  the \emph{minimal strip}
  $U(A)$ of $A$ by
  \[
    U(A):=P^{-1}P\partial A=\partial A+\sigma\n^\perp\subset\R^d.
  \]
  Note that $U(A)$ is the smallest
  $\sigma\n^\perp$-invariant subset of $\mathbb R^d$
  containing $\partial A$.
\end{definition}

Consider a stepped surface $A$.
As $\partial A$ is connected,
the set $P\partial A$ is an interval.
If $A$ is $\sigma N$-invariant,
then $\partial A/\sigma N$
is a compact subset of $\R^d/\sigma N$,
which implies compactness of the interval $P\partial A$.
Now let $B$ denote another $\sigma N$-invariant stepped
surface.
Suppose that $U(B)$ is disjoint from $U(A)$,
say with
$
  P\partial A<
  P\partial B
$
without loss of generality,
where we write $a<b$ for a pair
$(a,b)$ of intervals whenever $\sup a<\inf b$.
Then it is straightforward to work out that the original stepped surfaces 
are also ordered in the sense that $A\sqsubset B$.
Thus, in general, if $U(A)$ and $U(B)$
are disjoint, then either $A\sqsubset B$, or $B\sqsubset A$,
depending on the ordering of the intervals $P\partial A$
and $P\partial B$.

Let us now translate the previous definitions
to the unit torus $\T$ by application of the map $\pi$.
Write $\S$ for $\R/(|\n|/\n_1)\Z$.
Note that the hyperplane $\n^\perp+\Z^d$ and the circle $\R\e_1+\Z^d$
intersect at exactly $\n_1/|\n|$ points,
as subsets of $\T$.
This implies that for each $\x\in\T$,
there is a unique element $x\in\S$
such that $\x\in x\e_1+\n^\perp+\Z^d$.
This motivates the equations in the following definition.

\begin{definition}
  Write $P_\T:\T\to\S$
  for the unique map
  such that $\x\in P_\T(\x)\e_1+\n^\perp+\Z^d$
  for any $\x\in\T$.
  If $A$ is a $\sigma N$-invariant stepped surface,
  then define the \emph{minimal strip}
  $U_\T(A)$
  by
  \[
    U_\T(A):=\pi U(A)
    =P_\T^{-1}P_\T\partial_\T A
    =\partial_\T A+\n^\perp
    \subset\T.
  \]
  This is the smallest $\n^\perp$-invariant
  subset of $\T$
  which contains $\partial_\T A$.
  Finally, if $F\in F(\p,\n)$,
  then define
  $U_\T(F):=U_\T(A)$
  where $A\in F$;
  this definition is independent of the choice of $A\in F$.
\end{definition}

See the rightmost subfigure of Figure~\ref{fig_STRIPS} for an illustration of a minimal strip of a fracture.
By reasoning as before, we observe that $P_\T\partial_\T F$ is
a compact connected subset of the circle $\S$.
The interval $P_\T\partial_\T F$ and
the strip $U_\T(F)$ are thus characterised entirely
by the endpoints of this interval,
unless $P_\T\partial_\T F=\S$,
in which case $U_\T(F)=\T$.

\begin{definition}
  Define
  $h:F(\mathbf p,\mathbf n)\to \S$
  and $r:F(\mathbf p,\mathbf n)\to[0,|\mathbf n|/2\mathbf n_1]$
  such that
  \[
    P_\T\partial_\T F=h(F)+[-r(F),r(F)]
  \]
  for any $F\in F(\mathbf p,\mathbf n)$.
  If $P_\T\partial_\T F\neq\S$
  then this equality uniquely defines $h(F)$
  and $r(F)$; otherwise we set $h(F):=0$
  and $r(F):=|\mathbf n|/2\mathbf n_1$.
  We shall also write $[h(F)]$ for the unique number in $[0,|\mathbf n|/\mathbf n_1)$
  such that $h(F)=[h(F)]+(|\mathbf n|/\mathbf n_1)\mathbb Z$.
\end{definition}

If $F$ is a fracture and $P_\T\partial_\T F\neq \S$,
then the collection of minimal cylinders $(U(A))_{A\in F}$
is pairwise disjoint.
In particular, our previous observation now implies that $\sqsubset$
restricts to a total order on the stepped surfaces in $F$.

\begin{definition}
  \label{definition_of_D_nogwat}
  For fixed $n\in\mathbb N$,
  define
  \begin{align*}
    D(\mathbf p,\mathbf n,n)&:=
    \{
      \text{the strips $(U_\T(F_k))_{1\leq k\leq n}$ are pairwise disjoint}
    \}
    \\&\phantom{:}=
    \{
      \text{the intervals $(P_\T\partial_\T F_k)_{1\leq k\leq n}$ are pairwise disjoint}
    \}
    \subset F(\mathbf p,\mathbf n)^n.
  \end{align*}
\end{definition}

If $n\geq 2$ and $(F_1,\dots,F_n)\in D(\mathbf p,\mathbf n,n)$,
then all intervals $P_\T\partial_\T F_k$
are strict subsets of $\S$.
This means that all minimal cylinders in $(U(A))_{A\in\cup_kF_k}$
are pairwise disjoint, and consequently
$\sqsubset $ restricts to a total order on $\cup_kF_k$.
This implies the following lemma.

\begin{lemma}
  \label{lemma_yolo}
Let $n\geq 2$,
 let $(F_1,...,F_n)\in D(\mathbf p,\mathbf n,n)$,
 and write
 $h_k:=[h(F_k)]$.
Suppose that
$h_i<h_j$ whenever $i<j$.
Then we may pick a representative $A_k\in F_k$
of each fracture such that
\[\dots\sqsubset\theta^{-1}A_n\sqsubset A_1 \sqsubset A_2\sqsubset \dots\sqsubset A_n\sqsubset \theta A_1\sqsubset \theta A_2\sqsubset \cdots.\]
Make this into a $\mathbb Z$-indexed chain by setting
$A_{i+nj}=\theta^j A_i$ for $1\leq i\leq n$
and $j\in\mathbb Z$.
Then the strips $(U(A_k))_{k\in\mathbb Z}$
are pairwise disjoint.
\end{lemma}


Note in particular that the previous lemma relates
the ordering of the stepped surfaces directly to the ordering
of the numbers $(h_k)_{1\leq k\leq n}$.
The purpose of the introduction of cylinders, is the following lemma.
The lemma gives a much simpler expression for the height difference $\hat g-\hat f$,
although it requires a small correction which we can later show is negligible in the limit.
The lemma essentially says that---up to the correction term---the value 
of $\hat g-\hat f$ can be measured as the volume difference 
in the centre picture in  Figure~\ref{fig_STRIPS}
after introducing the correction term.
This is much simpler, because measuring the coloured areas in the centre 
picture equates to measuring the widths of the strips,
without taking into account the microscopic shape of the stepped surfaces.

\begin{lemma}
  \label{correction_lemma}
Suppose that $(F_1,\dots,F_{|\mathbf n|},G_1,\dots,G_{|\mathbf n|})\in D(\mathbf p,\n,2|\n|)$,
and write $h_k:=[h(F_k)]$
and $h_k':=[h(G_k)]$.
Then the shapes $V$ and $W$ corresponding to
$(F_k)_k$ and $(G_k)_k$ are neighbours if
and only if the indices of $(h_k)_k$
and those of $(h_k')_k$ can be reordered
such that either
\begin{equation}
  \label{eq:awdoiundaiwon}
  h_1<h_1'<\dots<h_{|\n|}<h_{|\n|}'
  \qquad\text{or}\qquad
  h_1'<h_1<\dots<h_{|\n|}'<h_{|\n|}.
\end{equation}

Moreover, there exists a function $\kappa:S(\p,\n)\to[-1,1]$
such that
\[
  \hat g+\kappa([g])-\hat f-\kappa([f])=
  1_{h_1'<h_1}-1_{h_1'>h_1}+2\frac{\n_1}{|\n|}\sum_{k=1}^{|\n|}h_k'-h_k
\]
whenever $f$ and $g$ are neighbouring height functions
with the corresponding fractures $(F_k)_k$
and $(G_k)_k$ satisfying $(F_1,\dots,F_{|\mathbf n|},G_1,\dots,G_{|\mathbf n|})\in D(\mathbf p,\n,2|\n|)$.
\end{lemma}

\begin{proof}
  For the first statement in this lemma, combine the first part of Lemma~\ref{awinbwanoafbw} with Lemma~\ref{lemma_yolo}.
  For the second statement in this lemma,
  we rely on Lemma~\ref{lemma:first_volume_lemma}
  and the intuitive picture of Figure~\ref{fig_STRIPS}.
  Select two neighbours $f$ and $g$
  such that the  $2|\n|$-tuple
  containing the corresponding fractures is
  contained in $D(\p,\n,2|\n|)$;
  see Figure~\ref{fig_STRIPS}.
  Define $\T(fg)$ and $\T(gf)$ as in the statement of Lemma~\ref{lemma:first_volume_lemma},
  and define the following subsets of $\T$:
  \[
    U_f:=\cup_kU_\T(F_k),
    \qquad
    U_g:=\cup_kU_\T(G_k),
    \qquad
    U:=U_f\cup U_g.
  \]
  Assume for now the claim
  that
  \begin{equation}
    \label{eq_to_prove_strip_width}
    \operatorname{Vol}(\T(gf)\smallsetminus U)-\operatorname{Vol}(\T(fg)\smallsetminus U)=
    1_{h_1'<h_1}-1_{h_1'>h_1}+2\frac{\n_1}{|\n|}\sum_{k=1}^{|\n|}h_k'-h_k.
  \end{equation}
  We must therefore find a function $\kappa:S(\p,\n)\to[-1,1]$
  such that
  \begin{align*}
    \kappa([g])-\kappa([f])&=
    \operatorname{Vol}(\T(fg)\cap U)-
    \operatorname{Vol}(\T(gf)\cap U)
    \\&=
    \operatorname{Vol}(\T(fg)\cap U_g)
    -\operatorname{Vol}(\T(gf)\cap U_g)\\
    &\qquad\qquad
    +\operatorname{Vol}(\T(fg)\cap U_f)
    -\operatorname{Vol}(\T(gf)\cap U_f).
  \end{align*}
  The following observation is crucial:
  consider $f$ fixed and
  $g$ variable, but
  conditional on $g\sim f$ and
  $(F_1,\dots,F_{|\n|},G_1,\dots,G_{|\n|})\in D(\p,\n,2|\n|)$.
Then the sets $\T(fg)\cap U_f$
and $\T(gf)\cap U_f$ are independent of $g$---see Figure~\ref{fig_STRIPS}.
We therefore simply define
$\kappa([f])=\operatorname{Vol}(\T(gf)\cap U_f)
-\operatorname{Vol}(\T(fg)\cap U_f)$ to obtain the final result;
the same definition implies that
$\kappa([g])=\operatorname{Vol}(\T(fg)\cap U_g)
-\operatorname{Vol}(\T(gf)\cap U_g)$
by symmetry.
If no neighbour $g$ of $f$ exists for the fixed function $f$ such that
the associated $2|\n|$-tuple belongs to $D(\p,\n,2|\n|)$,
then simply set $\kappa([f]):=0$.

We finally focus on the claim: Equation~\ref{eq_to_prove_strip_width}.
It suffices to consider the case $h_1'>h_1$;
the other case follows automatically by symmetry.
Write $r_k:=r(F_k)$ and $r_k':=r(G_k)$.
It is straightforward to work out that
\[
  \operatorname{Vol}(\T(gf)\smallsetminus U)
  =
  \frac{\n_1}{|\n|}\sum_{k=1}^{|\n|}
  h_k'-r_k'-h_k-r_k;
\]
each term corresponds to the width of one of the $|\n|$
connected components of the set $\operatorname{Vol}(\T(gf)\smallsetminus U)$.
Indeed, one observes in Figure~\ref{fig_STRIPS_2}
that the width of the dark blue strip equals $h_1'-r_1'-h_1-r_1$; in this figure $|\n|=\n_1=1$.
Similarly, for $\operatorname{Vol}(\T(fg)\smallsetminus U)$ we have
\[
\operatorname{Vol}(\T(fg)\smallsetminus U)
=
\frac{\n_1}{|\n|}\sum_{k=1}^{|\n|}
\begin{cases}
  h_{k+1}-r_{k+1}-h_k'-r_k'
&\text{if $k<|\n|$,}\\
(h_1+\frac{|\n|}{\n_1})-r_1-h_{|\n|}'-r_{|\n|}'
&\text{if $k=|\n|$.}
\end{cases}
\]
Indeed, the width of the light blue strip in Figure~\ref{fig_STRIPS_2} equals
$(h_1+1)-r_1-h_1'-r_1'$.
The last term is different owing to the cyclic nature of the line
$\mathbb R\e_1+\mathbb Z^d$ as a subset of $\T$.
A combination of the previous two displays yields Equation~\ref{eq_to_prove_strip_width}.
\end{proof}


\begin{figure}
\begin{center}
\begin{tikzpicture}[x=1cm,y=1cm,scale=0.30]

\begin{scope}
\clip (0,0) rectangle (25,25);

  \fill[\minuscolour] (0,0)--(5,0) -- (0,5) --cycle;
  \fill[\pluscolour] (15,0) -- (10,0) -- (4,6) -- (9,6)--cycle;
  \fill[\minuscolour] (25,0) -- (20,0) -- (14,6) -- (24,6)--(25,5)--cycle;

  \fill[\pluscolouralt] (0,10) --(0,20)--(10,10);
  \fill[\pluscolouralt] (25,10) --(10,25)--(20,25)--(25,20);
  \fill[\minuscolouralt] (15,10) --(0,25)--(5,25)--(20,10);

  \draw\alphaone;
  \draw\alphatwo;
  \draw (5,0) -- (0,5);
  \draw (5,25) -- (25,5);
  \draw (10,0) -- (0,10);
  \draw (10,25) -- (25,10);

  \fill[black] (6.5,3.5) circle (0.2);
  \node[anchor= south] at (6.5,3.5) {$\pi\x$};
  \fill[black] (17.5,12.5) circle (0.2);
  \node[anchor=north] at (17.5,12.5) {$\pi\y$};

  \fill[black] (12,8) circle (0.2);
  \node[anchor=south] at (12,8) {$\mathbf{m}$};

  \draw\betaone;
  \draw\betatwo;
  \draw (15,0) -- (8,7);
  \draw (20,0) -- (13,7);

  \draw[dashed] \betabarone;
  \draw[dashed] \betabartwo;
  \draw[dashed]  (11,9)--(0,20);
  \draw[dashed]  (16,9)--(0,25);
  \draw[dashed]  (20,25)--(25,20);


\end{scope}
\draw (0,0) rectangle (25,25);

\draw [decorate,decoration={brace,amplitude=5pt}](0,0) -- (7.5,0);
\node[anchor=south] at (3.75,0.5) {$h_1$};

\draw [decorate,decoration={brace,amplitude=5pt}](7.5,0) -- (10,0);
\node[anchor=south] at (8.75,0.5) {$r_1$};

\draw [decorate,decoration={brace,amplitude=5pt}](20,0) -- (17.5,0);
\node[anchor=north] at (18.75,-0.5) {$r_1'$};

\draw [decorate,decoration={brace,amplitude=5pt}](17.5,0) -- (0,0);
\node[anchor=north] at (8.75,-0.5) {$h_1'$};

\node at (-3,20) [anchor=south] {$\partial_\T F_1$};
\draw [->] (-3,20) to [bend left=5] (1,7);
\draw [->] (-3,20) to [bend right=5] (13,20.5);

\node at (28,5) [anchor=north] {$\partial_\T G_1$};
\draw [->] (28,5) to [bend right=5] (10,6.5);
\draw [->] (28,5) to [bend left=5] (24,16);

\node at (28,20) [anchor=south] {$\partial_\T\bar G_1$};
\draw [->,dashed] (28,20) to [bend left=5] (6,17);

\end{tikzpicture}
\caption{Figure~\ref{fig_STRIPS} in more detail}
\label{fig_STRIPS_2}

\end{center}
\end{figure}


\section{Random stepped surfaces}
\label{sec:random}

The goal of this section is to prove the following theorem
by applying ideas from~\cite{SELF}.
Note that if in the following theorem we had additionally 
asked that $\mathbf p\to\infty$
with $\p_i/\p_j$ tending to some constant $c_{ij}$
for each $i$ and $j$, then the same result would have followed more easily
from the older and more general work \emph{Random surfaces}
of Sheffield~\cite{SHEFFIELD}.
The theorem asserts two things:
that random fractures are flat in the limit of $\p$,
and that the random variable $h(F)$
converges to the natural uniform distribution.
The first assertion is essentially a concentration result for random paths and 
random surfaces,
for which we refer to the existing literature and give a brief argument 
to tailor the results to the current setting.
The second assertion is a consequence of symmetry under shifts with the 
mesh size of the scaled lattice tending to zero.

\begin{theorem}
  \label{thm_asymptotic_h_r}
  The set $F(\p,\n)$ is finite.
  Write $\mathbb P_\p$ for the uniform probability measure on
  $F(\p,\n)$, and write $F$ for the random fracture in this probability measure.
  Then $r(F)\to 0$ in probability,
  and the distribution of $h(F)$
  converges weakly to the uniform continuous distribution on $\S$
  as $\mathbf p\to\infty$.
\end{theorem}

Recalling the relation between $r$ and $h$ and the minimal cylinders in the previous 
subsection, we observe that in the limit it is extremely unlikely that the minimal
cylinders of two randomly chosen fractures intersect.
The formal statement, which plays a key role later, is as follows.

\begin{corollary}
  \label{corolor}
  Fix $k\in\N$
  and write $\P_\p$
  for the uniform probability measure on
  $F(\p,\n)^k$.
  Then the event $D(\p,\n,k)$ has high probability in $\P_\p$ as $\mathbf p\to\infty$.
\end{corollary}

In this section, we are after a concentration result;
for Theorem~\ref{thm_asymptotic_h_r},
the hard part is to prove that $r(F)\to 0$ in probability,
the weak convergence of $h(F)$ to the correct distribution then
follows from symmetry arguments.
We introduce some ideas from~\cite{SELF}
so that we are able to quickly derive the desired concentration.
Random stepped surfaces are interesting objects in their own right,
and are extensively discussed in~\cite{SELF}.
For a detailed treatment of the subject, we refer to that article.

Before proceeding, we shall introduce an intuitive auxiliary object 
that simplifies the study of random fractures.

\begin{definition}
  Define $\mathbf u:=\mathbf e_1+\dots+\mathbf e_d$.
  Write $F_0(\p,\n)$
  for the set of
  $\sigma N$-invariant stepped surfaces $A$
  with the property that
  $\mathbf 0\in A$
  and $\mathbf u\not\in A$.
  This is equivalent to asking that $\u/2\in\partial A$.
  Elements in $F_0(\p,\n)$
  are called \emph{anchored fractures}.
\end{definition}

Anchored fractures are somewhat easier to work with than fractures:
an anchored fracture consists of a single stepped surface,
which is furthermore known to contain a specific point (the anchor) in its boundary.
The extra control simplifies some of the technical details that would otherwise appear.
A uniformly random fracture is obtained by first selecting a uniformly random anchored fracture,
then shifting the anchor to a random vertex in the torus.
This statement is the subject of the following proposition.

\begin{proposition}
  \label{propo_n_to_one}
  The map
  \[
    F_0(\p,\n)\times\prod\nolimits_{i=1}^d \{1,2,\dots,\s_i\}\to F(\p,\n),\,
    (A,\mathbf x)\mapsto
    \{\theta^k(A+\mathbf x):k\in\mathbb Z\}
  \]
  is $n$-to-$1$ for some $n\in\mathbb N$.
\end{proposition}

\begin{proof}
It follows from the definitions of fractures and anchored fractures
that the map is 
well-defined.
We shall calculate the cardinality $n$ of 
the preimage of some fracture $F$ explicitly,
and show that this number is independent of the choice of $F$.
By considering the local combinatorial structure of a stepped surface,
one can work out that
\[
  n = |\{(A,\x,i):A\in F,\,\x\in A,\,\x+\e_i\not\in A\}|,
\]
and we aim to calculate the cardinality of the set on the right.
For fixed $i$,
it makes sense to consider all $\y$ which belong to the same cycle 
$\{\x+k\e_i:k\in\mathbb Z\}\subset\mathbb V$,
as we did in Subsection~\ref{whatever}.
One can show that one can find exactly $\n_i/|\n|$ pairs $(A,\y)$
such that $\y$ belongs to this cycle and such that $(A,\y,i)$ belongs to the set whose cardinality we are after.
Thus, we conclude that
\[
  n=\sum_i \frac{\n_i}{|\n|}\prod_{j\neq i}\s_j.
  \]
%
\end{proof}

The concentration result is effectively contained in the following lemma.

\begin{lemma}
  \label{lemma_desired_concentration}
  The set $F_0(\p,\n)$ is finite.
  Write $\mathbb P_\p$ for the uniform probability measure on
  $F_0(\p,\n)$, and write $A$ for the random anchored fracture in this probability measure.
  Then $\sigma^{-1}\partial A$ converges to $\n^\perp$  in probability in the Hausdorff metric
  in $\mathbb P_\p$
  as $\p\to\infty$.
\end{lemma}

Let us first derive Theorem~\ref{thm_asymptotic_h_r}
from this lemma, before proving it.


\begin{proof}[Proof of Theorem~\ref{thm_asymptotic_h_r}]
  Work in the setting of Proposition~\ref{propo_n_to_one}.
  Write $(A,\mathbf x)$
  for the random pair in the uniform probability
  measure $\mathbb P_\mathbf p$ on \[  F_0(\p,\n)\times\prod\nolimits_{i=1}^d \{1,2,\dots,\s_i\}
.\]
Then the distribution of
$\partial_\T (A+\mathbf x)$
in $\mathbb P_\mathbf p$
equals that of $\partial_\T F$
in the uniform probability measure on $F(\mathbf p,\mathbf n)$, due to the proposition.

Write $B_\varepsilon$ for a Euclidean ball in $\mathbb R^d$
of radius $\varepsilon$ centred around the origin.
For fixed $\varepsilon>0$,
Lemma~\ref{lemma_desired_concentration} says that
$\partial_\T(A+\mathbf x)\subset \mathbf n^\perp+\pi\mathbf x+B_\varepsilon+\Z^d$
with high probability as $\mathbf p\to\infty$,
and therefore $r(F)\to 0$ in probability
as $\mathbf p\to\infty$ as stated in the lemma.

Now conditional on $P_\T\partial_\T F\neq \S$,
that is, on $r(F)\neq |\mathbf n|/2\mathbf n_1$,
the distribution of $h(F)$
is invariant under shifts by $1/\mathbf s_1$.
But the conditioning event has high probability as $\mathbf p\to\infty$,
and the invariance mentioned implies that
$h(F)$ converges weakly to the uniform distribution,
since $\mathbf s_1=\mathbf p_1+\mathbf n_1\to\infty$ as $\mathbf p\to\infty$.
\end{proof}

\begin{proof}[Proof of Lemma~\ref{lemma_desired_concentration}]

The lemma is interesting in its own right and highly nontrivial.
The key ingredient is a concentration inequality in~\cite{SELF}.
The proof requires the introduction of some machinery of that paper,
so that we are able to efficiently derive the lemma
from the concentration inequality.

Informally, the proof runs as follows.
We first replace $\partial A$ by another notion of boundary,
which is sufficiently close to it in terms of Hausdorff distance.
This new notion of boundary has a natural parametrisation when $A$ 
is a stepped surface.
In particular, we may think of this new boundary as a random discrete function:
a \emph{height function}.
In $d=2$, this height function actually behaves like a simple random 
walk on $\mathbb Z$,
and in $d=3$ the height function corresponds exactly to the height function
of the dimer model on the hexagonal lattice.
For the concentration result that we are after, we essentially make three observations.
\begin{enumerate}
  \item We note that, after scaling by $\sigma^{-1}$, we are in fact dealing with Lipschitz functions on the compact space $\n^\perp/N$.
  This means in particular that \emph{pointwise concentration} of the surface implies uniform concentration 
  (that is, concentration of the surface around $\n^\perp$ in the Hausdorff distance).
  \item We demonstrate that the \emph{variance} of the height function at each point is small (after rescaling),
  so that the corresponding point on the random surface concentrates around some point in the rescaled space.
  \item We quote a simple result which asserts that the \emph{expectation} of the height function at each vertex lands 
  exactly (after rescaling) in $\n^\perp$, so that each point on the random surface concentrates 
  around a point in $\n^\perp$.
  Although we do not provide a proof, we mention that the expectation 
  can be calculated exactly due to a simple symmetry argument in our periodic setting.
\end{enumerate}
Jointly, these three steps prove the asserted result.

Let $A$ denote any stepped surface for now.
Write $\partial'A$ for the set of vertices $\x\in A$
such that $\x+\u\not\in A$.
The set $\partial'A$ should be thought of as an alternative
for $\partial A$.
Indeed,
the Hausdorff distance from $\partial A$ to $\partial' A$ is bounded
by $\sqrt d$,
hence
the Hausdorff distance from $\sigma^{-1}\partial A$ to $\sigma^{-1}\partial' A$
is bounded by $\|\sigma^{-1}\|\sqrt d=o(1)$ as $\p\to\infty$.
It therefore suffices to prove the lemma
with $\partial A$ replaced with $\partial'A$.

Let us focus on the set $\partial'A$;
we now cite a number of constructions
from Section~3 in~\cite{SELF}.
For any $\x\in\Z^d$, the line $\x+\R\u$ intersects
$\partial'A$ at exactly one vertex.
This suggests the following parametrisation for $\partial'A$.
Write $P':\R^d\to\u^\perp$ for orthogonal projection,
and $X^*$ for the image of $\Z^d$ under $P'$.
For any $\x\in X^*$, write $\x_A$ for the unique point of intersection of
$\x+\R\u$ with $\partial'A$.
Define the map $\phi_A:X^*\to\R$ by
$\phi_A(\x):=(\x_A,\u)$.
This means that
$\x_A=\x+\phi_A(\x)\mathbf u/d\in\partial'A$,
and all vertices of $\partial'A$ are reached by ranging
$\x$ over $X^*$.
The above construction gives a bijection
from the set of stepped surfaces to a suitable set of functions
on $X^*$.

Functions have more structure than sets, which is the merit of this construction.
If $A\in F_0(\p,\n)$, then $\0\in\partial'A$ by definition,
and $\sigma N\subset\partial'A$ because $A$ is $\sigma N$-invariant.
The $\sigma N$-invariance of stepped surfaces in $F_0(\p,\n)$
corresponds to the periodic setting as described in
Section~9 in~\cite{SELF}.
That section asserts that the set $F_0(\p,\n)$ is finite.
The rest of the proof depends on three observations,
one of which addresses a property of each individual stepped surface,
while the other two describe the distribution of
the random function $\phi_A$ in the measure $\P_\p$.
Write $<$ for the strict partial order on $\R^d$
such that $\x<\y$ if and only if $\x_i<\y_i$
for all $i$.
Write $\E_\p$ and $\Var_\p$ for the expectation and variance
of a random variable with respect to $\P_\p$,
the uniform probability measure on $F_0(\p,\n)$.

\begin{enumerate}

  \item
  \label{obs_incomp}
  If $\x,\y\in\partial'A$ for some stepped surface $A$,
  then $\x\not<\y$.
  This follows very naturally from the definition of a stepped surface;
  see
  the proof of Theorem~3.2 in~\cite{SELF}.
  The same
  property must hold for the set $\sigma^{-1}\partial' A$
  because the relation $<$ is preserved by $\sigma$ and its inverse $\sigma^{-1}$.

  \item
  \label{obs_expectation}
  For any $\x\in X^*$, symmetry arguments (Lemma~9.6 in~\cite{SELF}) imply that
  \[
    \E_\p[\x_A]=\x+\E_\p[\phi_A(\x)]\u/d\in \sigma\n^\perp.
  \]
  In other words, this means that  $\E_\p[\x_A]$ equals
  the unique point of intersection of the line $\x+\R\u$ with
  the hyperplane $\sigma\n^\perp$.

  \item
  \label{obs_variance}
  For any $\x\in X^*$, Lemma~10.4 in~\cite{SELF} says that
  \[
    \Var_\p\phi_A(\x)\leq \frac d 2
      \max_{B,D\in F_0(\p,\n)}\phi_B(\x)-\phi_D(\x)
      =
      \frac {\sqrt {d^3}} 2
      \max_{B,D\in F_0(\p,\n)}|\x_B-\x_D|
;
  \]
  equality is due to $\x_A=\x+\phi_A(\x)\u/d$ where $|\u/d|=1/\sqrt d$.
  Observation~\ref{obs_incomp} and the fact
  that $\sigma N\subset \partial'A$ for any $A\in F_0(\p,\n)$
  imply that
  $
  |\x_B-\x_D|
  \leq C\min_i\p_i
  $
  for any $\x\in X^*$ and $B,D\in F_0(\p,\n)$,
  where the constant $C$ depends on $\n$ only.
  We deduce that
  \[
    \Var_\p\phi_A(\x)\leq O( \min\nolimits_i\p_i)
    \]
  as $\p\to\infty$, uniformly over the choice of $\x\in X^*$.
\end{enumerate}

Let us start with an informal account of the remainder of the proof.
One should think of $\partial' A$ as
the graph of the periodic Lipschitz function $\phi_A$.
The Lipschitz property is due to the first of the previous
three observations. Periodicity allows one to
reduce this Lipschitz function to a Lipschitz function
on (appropriate discretisations of) a compact domain.
Lipschitz functions on compact domains have the convenient property
that pointwise convergence implies uniform convergence.
We are able to prove pointwise convergence
to the correct value with the help of
Observations~\ref{obs_expectation} and~\ref{obs_variance}.
The technical difficulty lies in handling rigorously the discretisation of the
boundary of the stepped surface.
Formally, we proceed as follows.

The remainder of the proof consists of two assertions.
First, we assert that
for fixed $\x\in\n^\perp$ and $\varepsilon>0$,
the random set $\sigma^{-1}\partial'A$ intersects
$B_{\x,\varepsilon}$ with high probability as $\p\to\infty$ in the measure $\mathbb P_\p$.
Here $B_{\x,\varepsilon}$ denotes an Euclidean ball of radius $\varepsilon$ centred at $\x$.
Second, we assert that $\sigma^{-1}\partial'A$ is close to
$\n^\perp$ in the Hausdorff metric if the former set intersects a finite number of suitably chosen small balls
with centres in $\n^\perp$.

Focus on the first assertion.
Fix $\mathbf x\in\n^\perp$
and $\varepsilon>0$.
It is straightforward to check
that
\[
  \sigma^{-1}(X^*+\R\u)\cap \n^\perp
  =
  \sigma^{-1}(\Z^d+\R\u)\cap\n^\perp
  =
  (\sigma^{-1}\Z^d+\R\sigma^{-1}\u)\cap\n^\perp
  \to
  \n^\perp
\]
in the Hausdorff metric as $\p\to\infty$---this is a deterministic statement.
Thus, for $\p$ sufficiently large,
we may choose $\mathbf y$ in the set on the left such that $|\x-\y|<\varepsilon/2$.
Then $\a:=P'\sigma\y\in X^*$ by choice of $\y$.
Observation~\ref{obs_expectation} states
that
\[
  \E_\p[\a_A]\in (\a+\R\u)\cap\sigma\n^\perp=\{\sigma\y\}.
\]
The vector $\a_A$ lies in $\partial'A$,
and therefore
$
  \z:=\sigma^{-1}\a_A\in\sigma^{-1}\partial'A
$.
For the first assertion it suffices to show that  $|\z-\y|<\varepsilon/2$ with high probability
as $\p\to\infty$.
The variance bound (Observation~\ref{obs_variance})
implies that $|\a_A-\sigma\y|$
is typically of order $O(\sqrt{\min_i\p_i})$
and therefore $|\z-\y|=|\sigma^{-1}\a_A-\sigma^{-1}\sigma\y|$
is typically of order $O(\sqrt{\min_i\p_i}\|\sigma^{-1}\|)=o(1)$
as $\p\to\infty$.
More precisely, $|\z-\y|<\varepsilon/2$ with high probability
as $\p\to\infty$.
%
%
%
This proves the first assertion.

Let us now focus on the second assertion.
It follows immediately from the first assertion
and from $N$-invariance of $\sigma^{-1}\partial'A$ that
for any $\varepsilon>0$,
we have $\mathbf n^\perp\subset\sigma^{-1}\partial'A +B_{\mathbf 0,\varepsilon}$
with high probability as $\mathbf p\to\infty$.
It is a straightforward consequence of the first
of the three observations that this also implies the converse inclusion;
that $\sigma^{-1}\partial'A\subset\mathbf n^\perp +B_{\mathbf 0,\varepsilon}$ with high probability.
Conclude that $\sigma^{-1}\partial'A\to\mathbf n^\perp$
in probability in the Hausdorff metric as $\mathbf p\to\infty$.

\end{proof}


\section{Process decomposition and martingale property}
\label{sec:process_decomposition_and_martingale_property}
We are able to asymptotically estimate $\alpha(\p,\n)$
by approximating the process $X(\0)$ by other
processes,
with vanishing errors.
First, we know that
$X(\0)$ and $\hat X+\kappa([X])$ differ by no more than some constant 
depending on $\p$ and $\n$ only,
so that the diffusivities of the two processes are equal.
Write $S^D(\p,\n)$
for the set of pairs of shapes
$(V,W)\in S(\p,\n)^2$ with the property
that $(F_1,\dots,F_{|\n|},G_1,\dots,G_{|\n|})\in D(\p,\n,2|\n|)$
where $(F_k)_k$ and $(G_k)_k$ are the fractures of $V$ and $W$ respectively.
In other words, $S^D(\p,\n)$ is the set of pairs of shapes which 
allow the application of Lemma~\ref{correction_lemma}. 
The increments of the process $\hat X+\kappa([X])$ typically behave well,
because (as will be clear later) the pair 
$([X_k],[X_{k+1}])$ is contained in 
$S^D(\p,\n)$ with high probability. 
We first introduce two auxiliary processes which facilitate the handling of the cases 
where the pair $([X_k],[X_{k+1}])$ does not belong to $S^D(\p,\n)$.
Start with the process $Y$, which is uniquely determined by the following
properties:
\begin{enumerate}
  \item  $Y_0=\hat X_0+\kappa([X_0])$,
  \item If $([X_n],[X_{n+1}])\in S^D(\p,\n)$, then  $Y_{n+1}=Y_n$,
  \item If $([X_n],[X_{n+1}])\not\in S^D(\p,\n)$, then
  \[Y_{n+1}-Y_n=\hat X_{n+1}+\kappa([X_{n+1}])-\hat X_n-\kappa([X_n]).\]
\end{enumerate}
This process should be thought of as recording the unfortunate transitions of the Markov chain.
Write $Z:=\hat X+\kappa([X])-Y$ for the difference;
this process satisfies the following properties:
\begin{enumerate}
  \item $Z_0=0$,
  \item If $([X_n],[X_{n+1}])\in S^D(\p,\n)$,
  then
  \begin{align*}
    Z_{n+1}-Z_n&=\hat X_{n+1}+\kappa([X_{n+1}])-\hat X_n-\kappa([X_n])\\
    &
    \numberthis
    \label{eq_Z_alt}
    =1_{h_1'<h_1}-1_{h_1'>h_1}+2\frac{\n_1}{|\n|}\sum_{k=1}^{|\n|}h_k'-h_k
  \end{align*}
  where $h_k$ and $h_k'$ are defined
  with respect to $f:=X_n$ and $g:=X_{n+1}$
  as in Lemma~\ref{correction_lemma},
  \item If $([X_n],[X_{n+1}])\not\in S^D(\p,\n)$,
  then $Z_{n+1}=Z_n$.
\end{enumerate}
This process should be thought of as the corrected version of the original process defined in terms of $X$.
The increments of both $Y$ and $Z$ are
bounded by $3$.
Moreover, we shall later see that either process has a well-defined diffusivity.
We prove that the diffusivity of $Y$ goes
to zero as $\p\to\infty$ because for most transitions
$([X_n],[X_{n+1}])\in S^D(\p,\n)$,
and we prove that the diffusivity of $Z$
goes to $1/(1+2|\n|)$.
This implies the main result:
that the diffusivities of the original processes $(X_n(\0))_{n}$ and $(\hat X_n)_{n}$ also tend to $1/(1+2|\n|)$.
We are able to asymptotically estimate the diffusivity of $Z$
because $Z$ is a martingale.
We shall prove this first.

\begin{theorem}
  The process $Z$ is a martingale.
\end{theorem}

\begin{proof}
  Let $f$ denote a $\p\n$-function.
  Write $\mathbb P_f$
  for the measure corresponding
  to the random walk $X$ starting from $f$.
  The goal is of course to prove that
  $\mathbb E_f(Z_1-Z_0)=0$.
  Write $\mathcal N$ for the set of shapes
  $W$ such that $W\sim[f]$ and $([f],W)\in S^D(\p,\n)$.
  The definition of $Z$ implies
  that it is sufficient
  to demonstrate that
  $\mathbb E_f((Z_1-Z_0)1_{[X_1]\in \mathcal N})=0$.
  We assume
  without loss of generality that
  $\{[X_1]\in \mathcal N\}$
  has has positive measure,
  so that our goal is to show that
  \[
  \mathbb E_f(Z_1-Z_0|[X_1]\in \mathcal N)=0.
  \]

  The distribution of $X_1$ is
  uniform in the neighbours of $f$.
  Therefore the distribution of $[X_1]$
  is uniform in the neighbours of $[f]$,
  except that the probability of obtaining
  $[X_1]=[f]$ has twice the probability of any other outcome
  (see Proposition~\ref{waarbenikmeebezig}).
  Note however that $[f]\not\in \mathcal N$
  because $([f],[f])\not\in S^D(\p,\n)$.
  Therefore it suffices to prove that
  \[
    \sum_{W\in \mathcal N}
    \mathbb E_f(Z_1-Z_0|[X_1]=W)=0.
  \]
  But now we know $[X_0]$
  and $[X_1]$ and that this pair of shapes
  is contained in $S^D(\p,\n)$,
  and therefore the difference $Z_1-Z_0$
  is given by
  \eqref{eq_Z_alt}.
  Therefore it suffices to prove that
  \[
    \sum_{W\in \mathcal N}\xi(W)=0
    \qquad\text{where}
    \qquad
    \xi(W):=1_{h_1'<h_1}-1_{h_1'>h_1}+2\frac{\n_1}{|\n|}\sum_{k=1}^{|\n|}h_k'-h_k
  \]
  and where in turn the $h_k$ are given by $f$
  and the $h_k'$ by $W$.

  We shall construct an involution
  $\tau$ on $\mathcal N$ with the property
  that it inverts the sign of $\xi$,
  that is, which satisfies $\xi(W)=-\xi(\tau(W))$
  for any $W\in \mathcal N$.
  This clearly implies the desired result.
  We shall restrict our discussion to the
  case that $|\n|=1$;
  then discuss how to generalise to
  $|\n|>1$.
  Recall that $\xi(W)$ equals
  the volume of the dark blue area
  minus the volume of the light blue
  area in the middle subfigure of
  Figure~\ref{fig_STRIPS}---if we choose
  $g$ to be the neighbour of $f$ with shape
  $W$.
  The idea is now to find an involution $\tau$ which interchanges the widths
  of the two coloured cylinders in that
  subfigure---the two coloured cylinders
  neighbouring $U_\T(G_1)$,
  where $G_1$ is the fracture characterising $W$.
  The construction which achieves
  this is pictured in Figure~\ref{fig_STRIPS_2}.
  Informally,
  the idea is to find an appropriate
  point $\m$
  around which to rotate $\partial_\T G_1$ by an angle of $\pi$
   in order to obtain a new fracture
   $\bar G_1$ with a new boundary
   $\partial_\T\bar G_1$.
   Rotating by $\pi$ twice returns the original
   fracture that we started with,
   so that $\tau$ is indeed an involution.
   We now formalise this
   construction.

   Recall that we have restricted ourself
   to studying the case
   $|\n|=1$.
   Write $F_1,G_1\in F(\p,\n)$
   for the fractures
   corresponding to the
   shapes $[f]$ and $W$
  respectively.
  Write $\bar G_1$ for
  the fracture corresponding
  to the shape $\tau(W)$,
  which we have yet to define.

Fix $A\in F_1$,
and fix $x,y\in\R$ such that
 $[x,y]:=P\partial A$.
 The topological boundary of  $U(A)$
 decomposes into two connected components,
 namely $P^{-1}(x)$ and $P^{-1}(y)$.
 As $\partial A/N$ is a compact subset of $\R^d/N$,
 it follows immediately
 that $P^{-1}(x)\cap \partial A$
 and $P^{-1}(y)\cap \partial A$
 are nonempty.
 Let $\x$ and $\y$ denote elements in these two sets respectively.
 Moreover, as $A+K$ is a union of centred unit cubes,
 it is immediate
 that the coordinates of $\x$ and $\y$
 are half-integers.
 In particular, $\x+\y\in \Z^d$.
 If $B\in G_1$,
 then it is straightforward to see that $-\partial B+\x+\y$
 is the boundary of some other $\sigma N$-invariant stepped surface
 $\bar B$.
 The desired fracture is then the fracture $\bar G_1:=\{\theta^k\bar B:k\in\Z\}$.
 Note that this is equivalent to rotating $\partial_\T G_1$ around the midpoint
 $\m$ in between $\pi\x$ and $\pi\y$ in Figure~\ref{fig_STRIPS_2}.
 This procedure interchanges the space to the left and right of $U_\T(G_1)$, and,
 as these two distances to the nearest cylinders remain positive, the new cylinder is also disjoint from
 $U_\T(F_1)$.
 The procedure is thus the involution with the desired properties, which proves the lemma
 for $|\n|=1$.
 If $|\n|>1$, then use the same procedure to
 interchange the distance on the left and right
 of each cylinder $U_\T(G_k)$
 to the nearest cylinder corresponding to a fracture of $f$.
\end{proof}


\section{Proof of the main result}
\label{sec:final_proof}

We require the following general and elementary result.

\begin{theorem}\label{minimisevariance}
Let $S$ be a finite set, $d:S\times S\to\mathbb{R}$ an antisymmetric map and $(X_n)_{n\geq 0}$
an irreducible reversible Markov chain on $S$ starting from its invariant distribution.
Define the process $A$ by $A_n:=\sum_{k=0}^{n-1}d(X_{k},X_{k+1})$.
A map $\kappa^*:S\to\R$ minimises
\begin{equation*}
  E(\kappa):=\mathbb{E}\left( \left(A_1+\kappa(X_1)-A_0-\kappa(X_0)\right)^2 \right)
\end{equation*}
over $\kappa\in\mathbb{R}^S$
if and only if $\left(A_n+\kappa^*\left(X_n\right)\right)_{n\geq 0}$ is a martingale. Such
a map $\kappa^*$ exists and is unique up to constant differences.
Moreover, the law of $A$
converges to that of a Brownian motion
of diffusivity $E(\kappa^*)$ in the scaling limit.
\end{theorem}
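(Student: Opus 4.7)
The plan is to identify the martingale condition for $M_n:=H_n+\kappa(X_n)$ with a Poisson equation in $\kappa$, prove that this equation is solvable using reversibility together with antisymmetry, and then connect its solution both to the variational problem and to the invariance principle. By the Markov property, $(M_n)$ is a martingale if and only if $\sum_{y\in S} p(x,y)\bigl(d(x,y)+\kappa(y)-\kappa(x)\bigr)=0$ for every $x\in S$, where $p$ denotes the transition kernel. Setting $g(x):=\sum_y p(x,y)\,d(x,y)$, this rewrites as the Poisson equation $(I-P)\kappa=-g$. Since $S$ is finite and the chain is irreducible and reversible, $P$ is self-adjoint on $\ell^2(\pi)$ and its kernel consists of the constants; hence the range of $I-P$ is the orthogonal complement of the constants, and $-g$ lies in this range if and only if $\mathbb{E}[d(X_0,X_1)]=0$. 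Stationarity and reversibility give that $(X_0,X_1)$ and $(X_1,X_0)$ have the same law, so antisymmetry of $d$ forces $\mathbb{E}[d(X_0,X_1)]=-\mathbb{E}[d(X_0,X_1)]=0$. Thus $\kappa^*$ exists and, by irreducibility, is unique up to constants.

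Next I would prove that $\kappa^*$ minimises $E$ if and only if $(H_n+\kappa^*(X_n))_{n\geq 0}$ is a martingale. Because $E$ is a convex quadratic in $\kappa\in\mathbb{R}^S$, its minimisers are exactly its critical points. Computing $\frac{d}{d\varepsilon}E(\kappa+\varepsilon\eta)\big|_{\varepsilon=0}$, the critical point condition reads $\mathbb{E}\bigl[(M_1-M_0)(\eta(X_1)-\eta(X_0))\bigr]=0$ for every $\eta\in\mathbb{R}^S$. The main obstacle is the next step: swapping $X_0$ and $X_1$ using reversibility and applying antisymmetry of $d$ gives $\mathbb{E}[(M_1-M_0)\eta(X_1)]=-\mathbb{E}[(M_1-M_0)\eta(X_0)]$, so the two-sided critical point condition collapses to the one-sided condition $\mathbb{E}[(M_1-M_0)\eta(X_0)]=0$ for all $\eta$, that is, $\mathbb{E}[M_1-M_0\mid X_0]=0$. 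By the Markov property and stationarity this is equivalent to $(M_n)$ being a martingale, closing the equivalence and identifying $\kappa^*$ from the previous paragraph as the unique (up to constants) minimiser of $E$.

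Finally I would deduce the invariance principle. Write $H_n=M_n-\kappa^*(X_n)+\kappa^*(X_0)$, where $M_n:=H_n+\kappa^*(X_n)-\kappa^*(X_0)$ is a mean-zero martingale by the previous step. Since $S$ is finite, $\kappa^*$ is bounded, and so $n^{-1/2}|H_{\lfloor nt\rfloor}-M_{\lfloor nt\rfloor}|\to 0$ uniformly in $t\in[0,1]$; it therefore suffices to treat $M$. The increments of $M$ form a stationary (by stationarity of $X$) and ergodic (by irreducibility of the finite chain) martingale difference sequence with second moment $E(\kappa^*)$, so the functional central limit theorem for stationary ergodic martingale differences yields that $(n^{-1/2} M_{\lfloor nt\rfloor})_{t\in[0,1]}$ converges in law to a Brownian motion of variance $E(\kappa^*)$ at time one, and by the above approximation the same limit holds for $(n^{-1/2} H_{\lfloor nt\rfloor})_{t\in[0,1]}$.
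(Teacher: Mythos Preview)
Your proof is correct and shares the same backbone as the paper's: convexity of $E$, equivalence of the first-order optimality condition with the martingale property via reversibility and antisymmetry of $d$, and a martingale functional CLT with bounded corrector. The main organizational difference is in how existence of $\kappa^*$ is obtained. The paper argues directly from the variational side: $E$ is quadratic, invariant under adding constants, and coercive once one coordinate is pinned, so a minimiser exists; then the first-order condition (computed as a partial derivative and simplified by detailed balance) yields the martingale property. You instead identify the martingale condition with the Poisson equation $(I-P)\kappa=-g$ and solve it by the Fredholm alternative in $\ell^2(\pi)$, using reversibility to see that $I-P$ is self-adjoint with kernel the constants and that $g$ is orthogonal to constants because $\mathbb{E}[d(X_0,X_1)]=0$. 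Your route makes the linear-algebraic structure explicit and gives existence and uniqueness in one stroke; the paper's route is slightly more self-contained (no spectral decomposition needed). Your symmetrisation step, swapping $(X_0,X_1)$ to collapse the two-sided critical condition to $\mathbb{E}[M_1-M_0\mid X_0]=0$, is exactly what the paper's partial-derivative computation accomplishes, just phrased more invariantly. For the invariance principle you are more explicit than the paper (which defers to ``standard arguments'' and then computes $\sigma^2(H)$ via orthogonality of martingale increments); both are fine.
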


The minimiser $\kappa^*$ is called the \emph{corrector} of the process $A$.

\begin{proof}[Proof of Theorem~\ref{minimisevariance}]
By writing the expectation as a finite sum
over the entries of the transition matrix of $X$
 we
see that the objective function is quadratic.
Therefore it is convex and the set of minima is an affine subspace of $\mathbb{R}^S$.
Adding a constant to the map $\kappa$ does not change $E(\kappa)$.
We also note that
$E(\kappa)\to\infty$ as the \emph{gradient} of $\kappa$
blows up, that is, whenever $\|\kappa\|\to\infty$
while keeping $\kappa(s)$ fixed for at least one $s\in S$.
Hence a minimiser of $E$ must exist and is unique up to constant differences.
Write $\kappa^*$ for such a minimiser,
so for all $s\in S$, we have
\[\frac{\partial}{\partial \kappa^*(s)}E(\kappa^*)=0.\]
By moving the derivative into the expectation
and using the detailed balance equations and antisymmetry of $d$ it is straightforward to check that
\[
\frac{\partial}{\partial \kappa(s)}E(\kappa)
=-4\mathbb{P}(X_0=s)\mathbb{E}\left(A_1+\kappa(X_1)-A_0-\kappa(X_0) \middle|X_0=s\right).
\]
 From this we conclude that, for all $s\in S$, \[\mathbb{E}\left(A_1+\kappa^*(X_1)-A_0-\kappa^*(X_0) \middle|X_0=s\right)=0.\]
 and therefore $\left(A_n+\kappa^*(X_n)\right)_{n\geq 0}$ must be a martingale.
 If $\left(A_n+\kappa^*(X_n)\right)_{n\geq 0}$ is a martingale
 then by reversing the previous argument, $\kappa^*$ is a local minimum of the objective function.
 The objective function is convex, so that $\kappa^*$ must be a global minimum.
Finally by standard arguments the distribution of $A$ converges to
that of a Brownian motion under diffusive scaling.
Write $\alpha(A)$ for the diffusivity of $A$.
 If $\left(A_n+\kappa^*(X_n)\right)_{n\geq 0}$ is a martingale then its increments are
orthogonal and identically distributed, so that
\begin{align*}
  \textstyle
  \alpha(A)&:=\lim_{n\to\infty}\frac1n\E A_n^2\\
  &\phantom{:}=\lim_{n\to\infty}\frac1n\E\left(\left(\sum\nolimits_{k=0}^{n-1}A_{k+1}+\kappa^*(X_{k+1})-A_{k}-\kappa^*(X_{k})  \right)^2\right)
  \textstyle=E(\kappa^*).
\end{align*}
The second equality follows from the fact that
$\kappa^*$ is bounded.
The penultimate expression is constant over $n$ by stationarity and orthogonality of martingale
increments, and setting $n=1$ gives the final equality.
This finishes the proof of the theorem.
\end{proof}

Now write $\P_\p$ for the measure corresponding to the random walk
$X=(X_n)_{n\geq 0}$,
and suppose the distribution of $[X]$ is invariant in $\P_\p$.
The diffusivity of the process $Y$
is bounded from above by $\E_\p(Y_1-Y_0)^2$,
and the diffusivity of the process $Z$
equals $\mathbb E_\p Z_1^2$ as $Z$ is a martingale starting from zero.
Thus, to prove the main result,
we must show that
\begin{equation}
  \label{eq:desired_limits}
\E_\p(Y_1-Y_0)^2
\leq 9\mathbb P_\p(([X_0],[X_1])\not\in S^D(\p,\n))
\to 0,
\qquad
\E_\p Z_1^2\to \frac1{1+2|\n|}
\end{equation}
as $\p\to\infty$;
for the bound on the left we use that $|Y_1-Y_0|\leq 3$.

For proving either limit,
it is sufficient to consider the distribution of the pair
$([X_0],[X_1])$ in the measure $\P_\p$.
We shall abuse notation by simply considering $\P_\p$ to be the probability measure
on $S(\p,\n)^2$ such that the distribution
of the random pair $(V,W)$ matches that of the random pair
$([X_0],[X_1])$ in the original measure.
Recall Definition~\ref{definition_of_D_nogwat}
for a description of $D(\p,\n,2|\n|)\subset F(\p,\n)^{2|\n|}$,
and recall the definition of $S^D(\p,\n)\subset S(\p,\n)^2$
at the start of Section~\ref{sec:process_decomposition_and_martingale_property}.
Write furthermore $N(\p,\n)$
for the pairs of shapes $(V,W)\subset S(\p,\n)^2$
which are neighbours.

By solving the detailed balance equation for the process $[X]$,
it is easy to see that Proposition~\ref{waarbenikmeebezig}
implies the following lemma.

\begin{lemma}
  For any $A,B\in S(\p,\n)$, we have
  \[
    \P_\p((V,W)=(A,B))
    =
    \begin{cases}
      0 &\text{if $(A,B)\not\in N(\p,\n)$,}\\
      1/T &\text{if $(A,B)\in N(\p,\n)$, but $A\neq B$,}\\
      2/T &\text{if $A=B$.}
    \end{cases}
  \]
  where $T=|N(\p,\n)|+|S(\p,\n)|$.
\end{lemma}

Let us now prove the following lemma.

\begin{lemma}
The event $S^D(\p,\n)$ has high probability in $\P_\p$
as $\p\to\infty$.
\end{lemma}

\begin{proof}
Write $\P_\p'$ for the uniform probability measure
on $N(\p,\n)$.
It suffices to demonstrate that $S^D(\p,\n)$
has high probability in $\P_\p'$.
To sample from $\P_\p'$,
one can first sample
a tuple $t:=(F_1,\dots,F_{|\n|},G_1,\dots,G_{|\n|})$
from $F(\p,\n)^{2|\n|}$ uniformly at random,
then condition on the event that
the tuples $t_f:=(F_1,\dots,F_{|\n|})$
and $t_g:=(G_1,\dots,G_{|\n|})$
define shapes which are neighbours.
Note that $t\in D(\p,\n,2|\n|)$
with high probability due to Corollary~\ref{corolor}.
Moreover, $t_f$ and $t_g$ define shapes $V$ and $W$ respectively whenever
$t\in D(\p,\n,2|\n|)$,
and, conditional on the latter event,
the probability that those shapes are indeed neighbours
is equal to
$2/{{2|\n|}\choose{|\n|}}$.
This follows from Lemma~\ref{correction_lemma},
which says that the relation $V\sim W$ depends only on the relative
ordering of the $h_k$ and $h_k'$,
and a uniformly random ordering
of these numbers satisfies the condition of that lemma with probability
$2/{{2|\n|}\choose{|\n|}}$.
Thus, conditional on the probability that the initial
proposal $t$ is accepted,
the probability that $t\in D(\p,\n,2|\n|)$,
and consequently $(V,W)\in S^D(\p,\n)$,
tends to one.
\end{proof}

\begin{proof}[Proof of Theorem~\ref{main}]
It suffices to demonstrate that the asserted
limits in \eqref{eq:desired_limits}
are correct.
The asymptotic behaviour of the limit on the left follows immediately
from the previous lemma.
For the other limit,
it now suffices to demonstrate that
\[
\E_\p(Z_1^2|S^D(\p,\n))\to \frac1{1+2|\n|}
\]
as $\p\to\infty$.
We use here that $|Z_1|\leq 3$,
so that the asymptotic expectation does not change by
conditioning on an event which has high probability.

Note that
$\E_\p(Z_1^2|S^D(\p,\n))$ equals
\[\textstyle
    \E_\p\left(
        \left(  1_{h_1'<h_1}-1_{h_1'>h_1}+2\frac{\n_1}{|\n|}\sum_{k=1}^{|\n|}h_k'-h_k
        \right)^2
    \middle|S^D(\p,\n)\right).
\]
Write $K_l$ for the event that
\[
  h_1<h_1'<\dots<h_{|\n|}<h_{|\n|}',\]
and write $K_r$ for the same event with each $h_k$ and $h_k'$ interchanged.
The distribution of
\[(h_1,\dots,h_{|\n|},h_1',\dots,h_{|\n|}')\]
in the probability measure $\P_\p(\cdot|S^D(\p,\n))$
converges weakly to the uniform continuous distribution on $[0,|\n|/\n_1)^d$
conditioned on $K_l\cup K_r$, that is, on the union of the two events in \eqref{eq:awdoiundaiwon};
this follows from similar arguments as in the proof of the previous lemma,
together with the last part Theorem~\ref{thm_asymptotic_h_r}
for the distribution of the $h_k$ and $h_k'$
and Lemma~\ref{correction_lemma} for the appropriate conditioning.

Write $\P$ for the uniform measure in the set $[0,1]^{2|\n|}$
with the random variables being $h_k$ and $h_k'$.
The previous paragraph says that
\[
\textstyle
\E_\p(Z_1^2|S^D(\p,\n))\to \E\left(
\left(
1_{h_1'<h_1}-1_{h_1'>h_1}+2\sum_{k=1}^{|\n|}h_k'-h_k
\right)^2
\middle|K_l\cup K_r
\right)
\]
as $\p\to\infty$,
where the dependence on $\p$ in the measure on the left vanishes in the limit on the right so that we are truly dealing with 
a continuous model.
It suffices to demonstrate that the expression on the right in this display
is equal to $1/(1+2|\n|)$.
In fact, symmetry arguments imply that it does not matter
if we condition on $K_l$, $K_r$, or $K_l\cup K_r$.
For simplicity, we choose to condition on $K_l$.

The event $K_l$ has $\P$-probability $1/(2|\n|)!$.
Observe that
\begin{align*}
  &\textstyle\E\left(1_{K_l}
  \left(
  1_{h_1'<h_1}-1_{h_1'>h_1}+2\sum_{k=1}^{|\n|}h_k'-h_k
  \right)^2
  \right)
\\
&\quad =
\int_{[0,1]^{2|\n|}}
\textstyle
1_{K_l}
\left(
1-2\sum_{k=1}^{|\n|}h_k'-h_k
\right)^2
\\
&\quad = \int_{\Delta^{2|\n|}}
\textstyle\left(\sum_{j=0}^{2|\n|}(-1)^j x_j\right)^2dx
\\
&\quad=\frac1{(1+2|\n|)!}.
\end{align*}
For the third line, we make the following
 change of variables:
\[
  h_k =\sum_{j=0}^{2k-2}x_j,\qquad
  h_k'=\sum_{j=0}^{2k-1}x_j,\qquad
  1=\sum_{j=0}^{2|\n|}x_j.
\]
We then integrate over the unit simplex,
precisely the set where the indicator, which now depends on $x$, equals $1$.
This means that we integrate over all $x_0,x_1,\dots,x_{2|\n|}\geq 0$
which sum to $1$,
with respect to the Lebesgue measure.
For the final equality, we gather terms of equal powers and
express the integral in terms of the multivariate beta function.
Conclude that
\[
  \begin{split}
\lim_{\p\to\infty}\alpha(\p,\n)&=\lim_{\p\to\infty}\E_\p Z_1^2=
\\&=\textstyle
 \E\left(
\left(
1_{h_1'<h_1}-1_{h_1'>h_1}+2\sum_{k=1}^{|\n|}h_k'-h_k
\right)^2
\middle|K_l
\right)
\\&=
\textstyle
  \mathbb P(K_l)^{-1}
\E\left(1_{K_l}
  \left(
  1_{h_1'<h_1}-1_{h_1'>h_1}+2\sum_{k=1}^{|\n|}h_k'-h_k
  \right)^2
  \right)
\\&
=\frac{(2|\n|)!}{(1+2|\n|)!}=\frac1{1+2|\n|}
=\frac1{1+2\gcd\n},
  \end{split}
\]
which is Theorem~\ref{main}.
\end{proof}

  \section{Discussion}
\subsection{Behaviour for finite values of $\p$ and $\n$}

We had already mentioned that $\alpha(\p,\n)$ is non-increasing 
in $\p$ and $\n$ for $d=1$,
while for fixed $d\geq 2$ the diffusivity $\alpha(\p,\n)$
is not monotone in its two parameters. 
This phenomenon occurs because for $\n$ small and $\p$ large,
there is a clear distinction between the global structure of a random sample
(the macroscopic location of the fractures)
and the local structure (the microscopic deviations of the fractures from 
being flat hyperplanes).
In the limit, these structures behave independently,
and the global structure is the only structure that contributes 
to the diffusivity.
We conjecture that this separation does not occur whenever 
the entries of $\p$ and $\n$ are of the same order,
and that consequently $\alpha(\p,\n)$ is much smaller---and not typically of order $\frac1{1+2\gcd\n}$.
For example, suppose that $d\geq 2$ and that $\p=\n$,
and such that all entries are large but simultaneously $\gcd\n=1$.
In this case, there is one fracture,
but this fracture fills most of the space and touches itself in many places.
When walking on the set of height functions,
the majority of the transitions consists in changing the microscopic structure of the fracture rather 
than moving the average of the height function by a significant amount.
Thus, in this case, the picture is dominated by the microscopic structure
of the fractures, and it is not even clear if in this case one can make sense of the macroscopic 
position of the fracture.

\subsection{Failure of the martingale property for $\hat X$}

A key observation in~\cite{EGN} is the observation that the average height
$\hat X$
of the random walk $X$ is a martingale whenever $d=1$.
In that case, each fracture consists of a single down step so that
the interaction between the fractures can be controlled exactly through a combinatorial argument.
Unfortunaly, the martingale property for $\hat X$ is false for $d\geq 2$.
This can already be seen for $d=2$ for small values of $\p$ and $\n$,
by working out a small numerical example.
The property that the expected increase in average height is equal to zero,
seems to be reserved (up to perhaps some coincidence) for the case $d=1$ and
for the cases that the height function exhibits some additional symmetry
which allows for the construction of an exact involution on the entire set of neighbours.

\subsection{Relation to the six-vertex model}
Restrict now to the case $d=2$.
In that case, the gradient of each height function can be mapped to a six-vertex model with 
periodic boundary conditions around the torus.
More precisely:
the set of shapes of height functions is in bijection with the set of six-vertex configurations 
on the torus which have the correct number of up-, down-, right-, and left-arrows.
This is no surprise:
the grey up-left lines in Subfigures~\ref{fig_small_example_B} and~\ref{fig_small_example_gB}
are reminiscent of a known representation of the six-vertex model.
The uniform measure on the set of shapes can therefore be thought of 
as the square ice model on the corresponding set of six-vertex configurations.
Remark that the square ice, in turn, is related combinatorially to alternating sign matrices.
This provides yet another perspective on the uniform measure on the set of shapes. 
The invariant measure of the random walk, however,
puts equal weight on the edges of the graph whose vertices are six-vertex configurations,
and not on the vertices themselves.
It remains an open question to ask if there is a deeper relation between this stationary distribution of this random walk on the one hand,
and the six-vertex model and alternating sign matrices on the other hand.

  \renewcommand\optionalindent{}

\section*{Acknowledgement}
The author would like to thank James Norris for the inspiration to study
walks on height functions,
and for the supervision of this project.
The author would also like to thank the anonymous referee for their constructive feedback
on the manuscript.
Finally, many thanks to Fran\c{c}ois Jacopin for translating the abstract into French.

The author was supported by the Department of
Pure Mathematics and Mathematical Statistics, University of Cambridge, the UK
Engineering and Physical Sciences Research Council grant EP/L016516/1.

  \bibliographystyle{amsalpha}
  \bibliography{bib}

\end{document}